\documentclass[11pt]{amsart}
\usepackage{amsmath,amssymb,amsthm,amsxtra,graphics,enumerate} 
\usepackage{stmaryrd}
\usepackage{times}
\usepackage[margin=1.3in]{geometry}
\usepackage[linktocpage=true, hidelinks, colorlinks=false, allcolors=blue]{hyperref}
\usepackage{comment}
\usepackage{scalerel}
\usepackage{mathrsfs}

\newtheorem{thm}{Theorem}[section]
\newtheorem{claim}[thm]{Claim}
\newtheorem{cor}[thm]{Corollary}

\newtheorem{prop}[thm]{Proposition}

\theoremstyle{definition}

\theoremstyle{remark}
\newtheorem{rem}[thm]{Remark}

\newcommand\Fcal{\mathcal{F}}

\newcommand\ran{\mathrm{ran}}

\newcommand\Add{\operatorname{Add}}

\newcommand\mand{\textrm{ and }}

\newcommand\Ascr{\mathscr{A}}
\newcommand\Cscr{\mathscr{C}}
\newcommand\Fscr{\mathscr{F}}

\newcommand\Sscr{\mathscr{S}}

\newcommand\Uscr{\mathscr{U}}
\newcommand\Vscr{\mathscr{V}}

\newcommand\Rbb{\mathbb{R}}

\newcommand\Th{{}^{th}}

\renewcommand\emptyset{\varnothing}

\newcommand\ZFC{\mathrm{ZFC}}
\newcommand\MA{\mathbf{MA}}

\newcommand\lex{\mathrm{lex}}

\newcommand\Ht{\mathrm{ht}}
\newcommand\next{\mathrm{next}}

\newcommand\dom{\operatorname{dom}}

\title{Forcing axioms and the structure theory of Aronszajn trees}
\author{Justin Tatch Moore}
\author{Stevo Todorcevic}

\subjclass{03E35, 03E50, 03E57}

\keywords{Aronszajn, club-isomorphic, coherent tree, Countryman line, Martin's Axiom, Proper Forcing Axiom}

\address{Justin Tatch Moore, Department of Mathematics, Cornell University, Ithaca, New York, USA}
\address{Stevo Todorcevic, Department of Mathematics, University of Toronto, Toronto, Canada; Institut de Math\'ematiques de Jussieu, Paris, France; and Matemati\v{c}ki Institut, SANU, Belgrade, Serbia}

\thanks{The authors would like to thank the Sicilian Sidewalk Cafe for
their hospitality during the first author's visit to Toronto
in June 2026 when many of the results of this paper were obtained.
}
\begin{document}

\begin{abstract}
    We study which structural results about Aronszajn trees require the Proper Forcing
    Axiom and which require only $\MA_{\aleph_1}$.
    In particular, we show that $\MA_{\aleph_1}$ is consistent with the existence
    of $2^{\aleph_1}$ pairwise non-club-isomorphic coherent A-trees.
  \end{abstract}

\maketitle

\section{Introduction}

The structure theory of Aronszajn trees and the corresponding linear orderings is strongly dependent on
the additional axioms of set theory. For example, 
in \cite{AbrahamShelah} Abraham and Shelah proved that the Proper Forcing Axiom (PFA) implies that
any two Aronszajn trees are club isomorphic and demonstrated that the same
conclusion did not follow from the weaker and better known axiom $\MA_{\aleph_1}$.
In subsequent work further developing the structure theory of Aronszajn trees and Aronszajn lines,
both the hypotheses $\MA_{\aleph_1}$ and PFA played an important role \cite{l-maps}, \cite{linear_basis}, \cite{walks}, \cite{univ_A-line}.
The purpose of the current paper is to clarify the role that these two axioms play in some of these structure theorems and also to correct certain
errors and misstatements each author has made in the literature \cite[Theorem 1.1]{univ_A-line} \cite[Corollary 2.1.13]{walks}. These errors are related to the fact that many of these structure theorems
require only $\MA_{\aleph_1}$ and that applications of PFA go through  very similar proper posets of size $\aleph_1$.
Therefore it is natural to determine the structure theorems
for which PFA is really needed as well as to
examine, modulo $\MA_{\aleph_1}$, the relative strengths behind the statements which do require PFA.

We have already mentioned that the statement that any two Aronszajn trees are
club isomorphic is a consequence of PFA but not of $\MA_{\aleph_1}$.
However, an examination of the arguments of \cite{AbrahamShelah} shows that two
Aronszajn trees can be made club isomorphic by a ccc poset in the forcing
extension of the Baumgartner poset for adding a club by finite approximations.
This is also true about the statement that any pair of Lipschitz trees $S$ and $T$
are metrically isomorphic\footnote{i.e., the existence of level sequences
$\{s_\alpha: \alpha\in \Gamma\}\subseteq S$ and 
$\{t_\alpha: \alpha\in \Gamma\}\subseteq T$ indexed by some uncountable
$\Gamma\subseteq \omega_1$ such that
$\Delta(s_\alpha, s_\beta) > \Delta(s_\alpha, s_\gamma)$ is equivalent to
$\Delta(t_\alpha, t_\beta) > \Delta(t_\alpha, t_\gamma)$) for every triple
$\alpha, \beta$ and $\gamma$ of elements of $\Gamma$.}, a statement used in
\cite{l-maps} (together with $\MA_{\aleph_1}$) to show that the corresponding
ultrafilters $\mathcal{U}(S)$ and $\mathcal{U}(T)$ are Rudin-Kiesler equivalent. 
Thus, we have the following  statements of interest here listed decreasingly in
strengths modulo the assumption of $\MA_{\aleph_1}$:
\begin{enumerate}
\item Every pair $S$ and $T$ of Aronszajn trees are club isomorphic.
\item Every pair $S$ and $T$ of Lipschitz trees are club isomorphic.
\item Every pair $S$ and $T$ of Lipschitz trees are metrically isomorphic.
\item $\mathcal{U}(S)\equiv_{RK}\mathcal{U}(T)$
for every pair of Lipschitz trees $S$ and $T$.
\end{enumerate}
This paper examines these statements, their intended applications, and their
independence from $\MA_{\aleph_1}$.
In Section \ref{forcing_clubiso:sec}, 
we present conditions that
guarantee that two Aronszajn tress can be made club isomorphic by ccc posets.
For example, given two Aronszajn trees $S$ and $T$ which contain no Souslin
subtrees' the existence of a club in $\omega_1$ that is sufficiently fast relative
$S$ and $T$ guarantees that the trees can be made club isomorphic by a ccc poset.
In Section \ref{indep_MA:sec}, 
we give independence results from $\MA_{\aleph_1}$ by producing a model of
$\MA_{\aleph_1}$ in which the following statements are true:
\begin{enumerate}[(a)]
\item \label{many_C-trees} 
There is a family $\Sscr$ of $2^{\aleph_1}$ coherent A-trees
such that no pair of elements of $\Sscr$ have club-isomorphic subtrees;

\item \label{many_C-lines}
There is a family $\Cscr$ of $2^{\aleph_1}$ Countryman lines such that no pair
of elements of $\Cscr$ have uncountable suborders which are isomorphic or reverse
isomorphic;

\item 
There is an A-tree which is not club isomorphic to a coherent A-tree; 

\item 
There are two pruned A-trees $S$ and $T$ and a strictly increasing level
preserving surjection $f:S \to T$ such that $S$ and $T$ are not club isomorphic;

\item \label{non_clubiso_equiv}
There are two pruned A-trees $S$ and $T$ which are not club isomorphic and yet
$S \leq T$ and $T \leq S$.
Moreover $S$ can be taken to be coherent.

\end{enumerate}
The key idea is to produce a model $\MA_{\aleph_1}$ with a family $\mathcal{S}$ of cardinality
$2^{\aleph_1}$ of Lipschitz trees such that for each $S\in \mathcal{S}$ the
ultrafilter $\mathcal{U}(S)$ extends the club filter and such that
$\mathcal{U}(S)\neq \mathcal{U}(S')$ whenever $S\neq S'$ are in $\mathcal{S}$.

Section \ref{two_consequences:sec} concludes the paper
with two consequences of $\MA_{\aleph_1}$ concerning coherent A-trees.
The first states that if $S$ and $T$ are
Aronszajn trees with $S$ coherent, then $S\leq T$ and $T\leq S$ imply that $S$ is
club isomorphic to a subtree of $T$.
If moreover $T$ is also coherent then $S$
and $T$ are club isomorphic.
This shows that the consistency result
(\ref{non_clubiso_equiv}) above is sharp.
The second  consequence of $\MA_{\aleph_1}$ states:
\emph{if $S$ and $T$ are club-isomorphic A-trees and $S$ is coherent, then there
is a countable-to-one function $f:\omega_1 \to \omega_1$ such that
$\Uscr(S) = f^* \Uscr(T)$.}

\begin{rem}
    Lucas Polymeris has independently demonstrated that $\MA_{\aleph_1}$
    is consistent with (\ref{many_C-trees}) and (\ref{many_C-lines}) \cite{Polymeris}.
    The methods he employs are different than those in the present paper
    analogous to the methods developed in \cite{AbrahamShelah} for
    demonstrating that $\MA_{\aleph_1}$ is consistent
    with the existence of two A-trees which are not club-isomorphic.
\end{rem}

\section{Preliminaries}

In this section, we review some standard terminology which we will need.
See \cite{trees_lo:Todorcevic} for a general survey on set-theoretic trees and linear orders;
see \cite{set_theory:kunen} for background material on set theory and forcing.

A \emph{tree} is a partially ordered set $(T,\leq)$ such that for all
$t \in T$, $\{s \in T: s < T\}$ is well ordered by $\leq$.
The ordertype of the set of predecessors of $t$ is the \emph{height of $t$}, denoted $\Ht(t)$.
The $\xi\Th$-level of $T$, denoted $T_\xi$, is the set of all elements of $T$ of height $\xi$.
The least $\xi$ such that $T_\xi$ is empty is the \emph{height of $T$}.
If $E$ is a subset of the height of $T$, then we will write $T \restriction E$ to denote $\bigcup_{\xi \in E} T_\xi$,
equipped with the order inherited from $T$.
All trees in this paper are assumed to be \emph{Hausdorff}: if $s$ and $t$ are distinct elements of $T_\nu$ where $\nu > 0$ is a
is a limit ordinal, then $s$ and $t$ have different sets of predecessors.

If $\kappa$ is a regular cardinal,
a $\kappa$-tree is a tree of height $\kappa$ in which all levels have cardinality less than $\kappa$.
A $\kappa$-tree is \emph{pruned} if every element has extensions of arbitrarily large height below $\kappa$.
It is easily checked that if $T$ is a $\kappa$-tree and $U \subseteq T$ consists of those elements of $T$ which have
$\kappa$-many extensions, then $U$ is a pruned $\kappa$-tree.

An \emph{Aronszajn tree (A-tree)} is an $\omega_1$-tree which does not contain an uncountable chain.
In this paper we will be concerned exclusively with A-trees.
An $\omega_1$-tree is \emph{pruned} if every element of the tree has uncountably many elements above it.
A \emph{subtree} $U$ of an $\omega_1$-tree $T$ is an uncountable subset of $T$
which is downward closed (if $u \in U$ and $v \in T$ with $v \leq u$,
then $v \in U$).

Any A-tree can be represented as a subtree of $\omega^{<\omega_1}$, ordered by extension and we will generally
represent our A-trees in this way.
An A-tree $S$ is \emph{coherent} if it is a subtree of $\omega^{<\omega_1}$ and has the property that
whenever $s,s' \in S_\xi$ for some $\xi$, $\{\eta < \xi : s(\eta) \ne s'(\eta)\}$ is finite.
We will also sometimes say that $S$ is a coherent A-tree if it is isomorphic to a coherent A-tree.

If $T$ is an A-tree and $s,t \in T$, then $\Delta(s,t)$ is the height of $\max \{u \in T : (u < s) \mand (u < t)\}$ (the maximum exists since $T$ is Hausdorff).
Note that if $T$ is represented as a subtree of $\omega^{<\omega_1}$ and $s$ and $t$ are incomparable,
then $\Delta(s,t) = \min \{\delta : s(\delta) \ne t(\delta)\}$.
If $T$ is a coherent A-tree, define $\Uscr(T)$ to consist of all $U \subseteq \omega_1$ such that for some uncountable
$X \subseteq T$, $$\Delta(X) := \{\Delta(s,t) : s,t \in X \textrm{ are incomparable}\}$$
is contained in $U$.
The second author defined $\Uscr(T)$ and proved $\Uscr(T)$ is a uniform filter.
He moreover showed that if the c.c.c. is productive (in particular if $\MA_{\aleph_1}$ holds) then $\Uscr(T)$ is an ultrafilter
\cite{ICM:stevo} \cite{l-maps}.

An A-tree $T$ is \emph{Lipschitz} if whenever $f:T \to T$ is an uncountable partial level preserving map,
there is is an uncountable $X \subseteq \dom(f)$ such that for all $s \ne t \in X$,
$$\Delta(s,t) \leq \Delta(f(s),f(t)).$$
The notion of a Lipschitz tree was introduced by the second author in \cite{l-maps} where he analyzed the class extensively
in the presence of the Proper Forcing Axiom (PFA).
For now we note the following result connecting the notion to that of a coherent tree.

\begin{thm} \cite{l-maps}
    Coherent trees with no Souslin subtrees are Lipschitz.
    Lipshitz trees do not contain Souslin subtrees and assuming $\MA_{\aleph_1}$ any Lipschitz tree is coherent.
\end{thm}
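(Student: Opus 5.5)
The theorem has three parts, and I will prove them in the order stated.

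\emph{Coherent trees without Souslin subtrees are Lipschitz.} Let $S$ be coherent with no Souslin subtree, and let $f$ be an uncountable partial level-preserving map on $S$. Since $f$ preserves levels, a counting argument lets me pass to an uncountable $X_0\subseteq\dom(f)$ with at most one point per level. For $s\in X_0$ of height $\xi$, both $s$ and $f(s)$ lie in $S_\xi$, so by coherence they differ on a finite set $F_s\subseteq\xi$. By the $\Delta$-system lemma and a further refinement I can assume the following:
\begin{itemize}
\item the $F_s$ form a $\Delta$-system with root $r$;
\item $s\restriction r$ and $f(s)\restriction r$ do not depend on $s$;
\item the $F_s\setminus r$ are pairwise separated, lying above a fixed countable ordinal and increasing along heights.
\end{itemize}
Now take $s\neq t$ in the resulting set with incomparable $s,t$ and $\delta=\Delta(s,t)$. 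Write $\delta'$ for the $\Delta$ of the corresponding pair $f(s),f(t)$. The goal is $\delta\le\delta'$. This holds as soon as $s\restriction\delta$ and $t\restriction\delta$ agree with $f(s)$ and $f(t)$ below $\delta$, that is, as soon as $\delta$ lies below every point of $(F_s\cup F_t)\setminus r$.

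Arranging that condition is where the hypothesis of no Souslin subtree enters, and this is the main obstacle. The standard route is to show that the set $\{s\restriction \min(F_s\setminus r): s\in X\}$ is not a Souslin-type antichain-free set. Since $S$ has no Souslin subtree, the downward closure of any uncountable subset contains an uncountable antichain. An uncountable antichain among the restrictions $s\restriction\min(F_s\setminus r)$ gives pairs whose splitting point occurs before both modification sets, so the desired inequality $\Delta(s,t)\le\Delta(f(s),f(t))$ holds for those pairs. To get it for all pairs I would then apply a partition argument. The natural tool is the fact that Aronszajn trees without Souslin subtrees make the relevant ``good pair'' poset ccc, which yields uncountable homogeneous sets in the definable sense used in \cite{l-maps}. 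I would carry this out carefully.

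\emph{Lipschitz trees have no Souslin subtrees.} Suppose $U\subseteq T$ is a Souslin subtree of a Lipschitz tree $T$. The plan is to build an uncountable level-preserving partial map $f$ on $U$ that sends many points to points which split below where their arguments split. For each $\xi$ in an uncountable set, pick incomparable $s_\xi,t_\xi\in U_\xi$ extending nodes that split low. Define $f(s_\xi)=t_\xi$, choosing the pairs so that for $\xi<\eta$ we have $\Delta(t_\xi,t_\eta)<\Delta(s_\xi,s_\eta)$; this is possible by choosing the $t$'s to branch early and the $s$'s along a single branch-like set. Lipschitzness then gives an uncountable $X$ on which $\Delta(s,t)\le\Delta(f(s),f(t))$, which contradicts this construction.

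\emph{Under $\MA_{\aleph_1}$, Lipschitz trees are coherent.} Under $\MA_{\aleph_1}$ every Aronszajn tree with no Souslin subtree is special. Combining this with the Lipschitz property, I would build a coherent sequence through $T$ by forcing: for an Aronszajn tree one obtains via a ccc poset and $\MA_{\aleph_1}$ a level-preserving map to a coherent tree, and Lipschitzness then lets one invert this structure. This step I would take directly from \cite{l-maps}.
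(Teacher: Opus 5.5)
The paper does not prove this theorem; it only cites \cite{l-maps}, so your proposal has to stand on its own. It has a genuine gap in the second part: the construction you describe cannot exist. You take $t_\xi\in U_\xi$, so $\{t_\xi\}$ is an uncountable subset of the Souslin tree $U$. Hence every uncountable subset of it contains a comparable pair $t_\xi<t_\eta$, since otherwise it would be an uncountable antichain of $U$. For such a pair, $\Delta(t_\xi,t_\eta)$ is as large as a $\Delta$ between nodes of heights $\xi$ and $\eta$ can be. So $\Delta(t_\xi,t_\eta)\ge\Delta(s_\xi,s_\eta)$, and your requirement ``for all $\xi<\eta$'' fails. Moreover, ``the $s$'s along a single branch-like set'' cannot mean a chain in an Aronszajn tree.

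The Souslin hypothesis has to be used on the \emph{domain} side instead. Every uncountable $X\subseteq U$ contains some $s<t$, so it is enough to build $f$ sending comparable pairs to pairs that split below the height of the lower node. For instance:
\begin{itemize}
\item Pass to a pruned Souslin subtree $U$.
\item Fix a club $C$ such that every $u\in U$ of height $\alpha\in C$ has two incomparable extensions at height $\beta_\alpha=\min(C\setminus(\alpha+1))$. This is possible because in a pruned Aronszajn tree every node has incomparable extensions.
\item For $v\in U_{\beta_\alpha}$, let $f(v)\in U_{\beta_\alpha}$ be some node different from $v$ with $f(v)\restriction\alpha=v\restriction\alpha$.
\end{itemize}
Now let $s<t$ be in $\dom(f)$ with $\Ht(t)=\beta_{\alpha'}$. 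Then $\Ht(s)\le\alpha'$, so $f(t)\ge t\restriction\alpha'\ge s$, while $f(s)\perp s$. Hence $f(s)\perp f(t)$ and
\[
\Delta(f(s),f(t))=\Delta(f(s),s)<\Ht(s)=\Delta(s,t),
\]
using the convention $\Delta(s,t)=\Ht(s)$ for $s<t$. Thus no uncountable subset of $\dom(f)$ witnesses the Lipschitz property.

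The third part is not proved. Deferring to \cite{l-maps} matches what the paper does, but the sketch you give carries no argument. A level-preserving map from $T$ into a coherent tree shows that $T$ is coherent only if it is injective on every level of $T$. The Lipschitz property, by contrast, only gives control on some uncountable subset of a given domain. What is needed is an actual construction of a coherent representation of the whole tree, and nothing in your sketch indicates how Lipschitzness would produce one.

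The first part, on the other hand, is essentially correct, and the ``partition argument'' is unnecessary. Let $\alpha_s=\min(F_s\setminus r)$. The absence of Souslin subtrees gives an uncountable $B$ with $\{s\restriction\alpha_s:s\in B\}$ an antichain: take an uncountable antichain among the nodes having uncountably many extensions in $\{s\restriction\alpha_s\}$, then choose extensions. Then for \emph{every} $s\neq t$ in $B$ we have $\delta=\Delta(s,t)<\min(\alpha_s,\alpha_t)$. So $f(s)\restriction\delta=f(t)\restriction\delta$: they agree on $r$ by your second bullet, and equal $s$, respectively $t$, on $\delta\setminus r$. Note that it is $f(s)$ and $f(t)$ that agree below $\delta$, not $s$ and $f(s)$, which differ on $r\cap\delta$.
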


If $S$ and $T$ are $\omega_1$-trees, we write $S \leq T$ to denote the assertion that there is a strictly increasing order
preserving map from $S$ to $T$.
Define $S \oplus T$ to be the disjoint union of $S$ and $T$ ordered
so that the canonical maps of $S$ and $T$ into $S \oplus T$ are order preserving and so that every element of $S$ is incomparable 
to every element of $T$.
Note that $S,T \leq S \oplus T$.
Define $S \otimes T = \{(s,t) \in S \times T : \Ht(s) = \Ht(t)\}$, equipped with the coordinatewise order inherited from
$S \times T$.
Observe that the projection maps witness $S \otimes T \leq S,T$.

We now recall the following definition from \cite{con_linear_basis2}:
a forcing $P$ \emph{preserves subtrees} of an $\omega_1$-tree $T$ if every subtree of $T$ which occurs in a generic
extension by $P$ contains a subtree in the ground model.
This concept originated from the following result, which itself was an extension of work of Silver \cite{silver}.

\begin{thm} \cite[Lemma 3.1]{baumbase} \cite[Lemma 8.13]{trees_lo:Todorcevic}
$\sigma$-closed forcings preserve subtrees of $\omega_1$-trees.    
\end{thm}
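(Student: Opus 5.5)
We prove the statement in the following form: if $P$ is $\sigma$-closed, $T$ is an $\omega_1$-tree, and $\dot U$ is a $P$-name forced to be a subtree of $T$, then some ground-model subtree $U_0\subseteq T$ satisfies $p\Vdash U_0\subseteq\dot U$ for a suitable condition $p$ below any prescribed one. The argument is Silver's tree-splitting construction. We look at the ground-model set
$$W=\{t\in T:\ \exists q\le p\ (q\Vdash t\in\dot U)\}.$$
Since $\dot U$ is forced to be downward closed, so is $W$. If $W'$ denotes the set of $t\in W$ with uncountably many extensions in $W$, then $W'$ is downward closed, and it is uncountable: every node of $T$ in $\dot U$ lies in $W$, and $\dot U$ is forced to be uncountable. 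Thinning further, we may assume $W'$ is a pruned subtree. The difficulty is that different nodes of $W'$ may be forced into $\dot U$ by incompatible conditions, so $W'$ need not be contained in $\dot U$.

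The plan is to refine along conditions. First, by strengthening $p$, we can assume that for every $q\le p$ the set $W_q=\{t:\exists r\le q\ (r\Vdash t\in\dot U)\}$ satisfies $W_q\supseteq W$ up to pruning. To arrange this, suppose it fails densely, so that each $q$ has an extension $q'$ with $W_{q'}$ strictly smaller. Build a decreasing $\omega_1$-sequence, or a countable fusion sequence, whose sets $W_q$ strictly decrease and so yield a contradiction. Since the $W_q$ are subsets of $T$ and pruned subtrees can shrink only $\aleph_1$ times in a well-founded manner, this needs a cardinality argument. The cleaner route is the standard one.

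\textbf{Contrapositive route.} Suppose no ground-model subtree is forced into $\dot U$ below $p$. Then for every $q\le p$ and every node $t$ lying in $\dot U$ with uncountably many successors, there are $q_0,q_1\le q$ and incomparable $t_0,t_1\ge t$ of the same height with $q_i\Vdash t_i\in\dot U$, and with $q_0,q_1$ deciding that different extensions survive. If this splitting failed below some $q$ and $t$, then the set of nodes above $t$ forced in by some extension of $q$ would be a chain modulo height, which is impossible in an uncountable downward-closed set without uncountable chains. In the Aronszajn case this is immediate. In the general $\omega_1$-tree case the chain itself would be a ground-model branch, and the downward closure of that branch is a ground-model subtree forced into $\dot U$ below $q$, so the assumption already gives the dichotomy.

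Using the splitting, we build a binary tree of conditions $q_\sigma$ for $\sigma\in2^{<\omega}$, with nodes $t_\sigma$ and heights $\alpha_n$ that are the same at each level $n$. The $t_{\sigma^\frown0}$ and $t_{\sigma^\frown1}$ split above $t_\sigma$, and $q_\sigma\Vdash t_\sigma\in\dot U$. By $\sigma$-closure, each branch $x\in2^\omega$ has a lower bound $q_x$. Let $\alpha=\sup_n\alpha_n$. Then $q_x$ forces that $\dot U$ contains the nodes $t_{x\restriction n}$, and since $\dot U$ is pruned it must contain some node of level $\alpha$ above all of them. By the Hausdorff property, distinct $x$ give distinct limit nodes at level $\alpha$, and so the level $T_\alpha$ has $2^{\aleph_0}$ elements, contradicting countability of levels.

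\textbf{Main obstacle.} To guarantee that a node at level $\alpha$ exists above each branch, $q_x$ must force $\dot U$ to be pruned, or at least to meet level $\alpha$ above $t_{x\restriction n}$ for all $n$. We handle this by replacing $\dot U$ with its pruned core, which is forced to be a subtree, and by having $q_x$ decide an element $u_x\in\dot U\cap T_\alpha$ that lies above every $t_{x\restriction n}$. This is legitimate because in the extension, above a node of the pruned $\dot U$, the nodes $t_{x\restriction n}$ form a chain lying inside $\dot U$ with a bound at level $\alpha$; we arrange this by choosing each $t_{\sigma}$ extended through a $\dot U$-node whose successor set is decided. Most of the work lies in the careful bookkeeping of these simultaneous decisions.
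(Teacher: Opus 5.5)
There is a genuine gap, and it sits exactly at the step you call the main obstacle. You assert that the lower bound $q_x$ forces $\dot U$ to contain a node of level $\alpha=\sup_n\alpha_n$ above every $t_{x\restriction n}$. Prunedness only says that each single node of $\dot U$ has extensions in $\dot U$ at arbitrarily high levels. It says nothing about upper bounds of countable chains at limit levels, and here such bounds typically do not exist even in $T$. If $x\ne y$, the chains $\{t_{x\restriction n}:n<\omega\}$ and $\{t_{y\restriction n}:n<\omega\}$ split at a finite stage, so any upper bounds at level $\alpha$ would be distinct. Since $T_\alpha$ is countable, all but countably many of your $2^{\aleph_0}$ chains have no upper bound in $T_\alpha$ at all. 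This is a fact about the ground-model tree $T$, so no ``bookkeeping of decisions'' can make $q_x$ force such a bound. A sanity check shows the strategy is broken, not merely incomplete. When $T$ is Aronszajn, your splitting step is proved without using the hypothesis that no ground-model subtree is forced into $\dot U$. So the same argument, applied to the trivial forcing and $\dot U=\check T$ with $T$ any pruned A-tree, would ``prove'' that $T_\alpha$ has size $2^{\aleph_0}$. In Silver's branch lemma, which you are adapting, the bound at level $\alpha$ comes free because $\dot b$ is a cofinal branch containing all the $t_{x\restriction n}$; for subtrees there is no substitute.

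The missing idea is to use the negated conclusion in this form: for every $q\le p$, the ground-model set $U_q=\{t\in T: q \text{ forces } t\in\dot U\}$ is countable. This set is downward closed, so if some $U_q$ were uncountable it would be the desired subtree. Fix a countable $M\prec H_\theta$ containing $P,T,\dot U,p$, put $\delta=M\cap\omega_1$, and enumerate the countable level $T_\delta=\{u_k:k<\omega\}$. Build $p=q_0\ge q_1\ge\cdots$ in $M$ as follows. Since $U_{q_k}\in M$ is countable, there is $\beta_k<\delta$ exceeding the height of every element of $U_{q_k}$. Then $u_k\restriction\beta_k$ is an element of $M$ not forced into $\dot U$ by $q_k$, and elementarity gives $q_{k+1}\le q_k$ in $M$ forcing $u_k\restriction\beta_k\notin\dot U$. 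A lower bound of the $q_k$, which exists by $\sigma$-closure, forces $\dot U\cap T_\delta=\emptyset$. This contradicts that $\dot U$ is an uncountable downward closed subset of $T$. Countability of levels is thus used to kill the countably many nodes of $T_\delta$ one at a time, not to bound a perfect set of nodes. The paper gives no proof of this cited result, but this level-$\delta$ argument is the same idea it uses to show that strongly proper posets preserve subtrees.
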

\noindent
We will see in the next section that the combination a poset preserving subtrees of A-trees and yet adding a club containing
no ground model club is useful in the context of trying to force two A-trees to be club isomorphic.

Next we recall the definition of Baumgartner's poset for adding a club by finite conditions (see \cite{forcing_clubs}).
Let $P$ consist of all pairs $p = (E_p,U_p)$ such that $E_p$ is a finite subset of $\omega_1$ and 
$U_p$ is a countable clopen subset of $\omega_1 \setminus U_p$.
Define $q \leq p$ if $E_p \subseteq E_q$ and $U_p \subseteq U_q$.
The union of the first coordinates of a generic filter $G \subseteq P$ is known as a \emph{Baumgartner club}.
The significant properties of this poset are that it is \emph{strongly proper},\footnote{Here recall that a condition $p$ in a poset $P$ is (strongly) $(M,P)$-generic for a suitable countable elementary submodel $M$ if
it forces that $G \cap M$ is $M$-generic ($V$-generic) for $P \cap M$; $P$ is (strongly) proper if whenever $M$ is a suitable countable elementary submodel for $P$ and $p \in M \cap P$, there is a $q \leq p$ which is (strongly) $(M,P)$-generic.
}
of cardinality $\aleph_1$ (and hence preserves
cardinals above $\aleph_1$), and no infinite subset of generic club is in the ground model.
We note here the following fact, which implies that the poset to add a Baumgartner club also preserves subtrees of A-trees.

\begin{prop} \label{strong_proper_pres_subtree}
Strongly proper posets preserve subtrees of $\omega_1$-trees.
\end{prop}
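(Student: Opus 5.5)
Let $P$ be strongly proper and $T$ an $\omega_1$-tree. Let $\dot U$ be a $P$-name and $p$ a condition forcing that $\dot U$ is a subtree of $T$, i.e.\ an uncountable downward closed subset of $T$. We must find a ground model subtree $W \subseteq T$ and some $q \le p$ with $q \Vdash W \subseteq \dot U$. The natural candidate is the set of nodes that some extension of a fixed condition forces into $\dot U$.

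\textbf{Choice of the condition.} Take a countable elementary submodel $M \prec H(\theta)$ containing $P$, $T$, $\dot U$ and $p$, and set $\delta = M \cap \omega_1$. By strong properness, pick $q \le p$ that is strongly $(M,P)$-generic. Since $\dot U$ is forced to be uncountable and downward closed, and $T_\delta$ is countable, we may extend $q$ (keeping strong genericity, which passes to extensions) so that $q \Vdash t \in \dot U$ for some fixed $t \in T_\delta$. Now let
$$W = \{ s \in T : \exists r \le q\ (r \Vdash t \in \dot U) \text{ and } \dots \}.$$
To make $W$ definable from parameters in $M$, a better definition is: $W$ is the set of $s \in T$ such that every $r' \le p$ in a suitable dense set has an extension forcing $s \in \dot U$. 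In practice I would work with the following concrete set.

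\textbf{Main step.} Strong genericity means that for every $r \le q$ there is $r|M \in P \cap M$ such that every extension of $r|M$ in $M$ is compatible with $r$. Given $r \le q$ with $r \Vdash t \in \dot U$, and $s < t$ with $s \in M$ (that is, $\Ht(s) < \delta$), we have $r \Vdash s \in \dot U$. Let $r_0 = r|M$. For each $\xi < \delta$, $M$ can see conditions below $r_0$ forcing various nodes of level $\xi$ into $\dot U$. The claim I want is that
$$W_{r_0} = \{ s \in T : \text{every } r' \le r_0 \text{ is compatible with some condition forcing } s \in \dot U\}$$
contains all $s < t$. This holds because any $r' \le r_0$ with $r' \in M$ is compatible with $r$, and $r \Vdash s \in \dot U$. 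Elementarity then lets us replace "$r' \in M$" by arbitrary $r'$, since $W_{r_0}$ is defined in $M$ and membership of $s$ is decided correctly in $M$.

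\textbf{Conclusion.} So $W_{r_0} \in M$ is downward closed and contains nodes of every height below $\delta$. By elementarity it is uncountable, hence a ground model subtree. Moreover $r_0$ forces $W_{r_0} \subseteq \dot U$, since every $s \in W_{r_0}$ is forced into $\dot U$ densely below $r_0$. As $r$ extends $r_0$ and $r \le q \le p$, the condition $r$ forces that the ground model subtree $W_{r_0}$ is contained in $\dot U$.

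The delicate point is the transfer from "all $r' \in M$ below $r_0$" to "all $r'$ below $r_0$". Elementarity gives this only if the defining statement for $s$ has parameters in $M$; it does, since $s$, $r_0$, $\dot U \in M$. A density argument inside $M$ then shows that failure for some $r'$ would reflect to an $r' \in M$. I expect this step to be the main obstacle to write cleanly.
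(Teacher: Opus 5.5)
Your proposal is correct and is essentially the paper's proof. Your set $W_{r_0}$ (``every $r'\le r_0$ is compatible with a condition forcing $s\in\dot U$'') is just $\{s : r_0\Vdash s\in\dot U\}$, which is the paper's $U$ with your reduction $r_0$ in the role of $p'$. Your elementarity transfer is the paper's argument that a failure would be witnessed by some $p''\le p'$ in $M$ forcing $u\notin\dot S$, which is impossible because $p''$ is compatible with the strongly generic condition.

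One small point, which the paper's write-up also glosses over: the reduction property alone gives neither $r\le r_0$ nor $r_0\le p$. Since $r_0$ is compatible with $r\le p$, you can strengthen $r_0$ inside $M$ to lie below $p$; it remains a reduction, and then $W_{r_0}$ is downward closed. You should then use a common extension of $r$ and $r_0$ as the final condition forcing $W_{r_0}\subseteq\dot U$.
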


\begin{proof}
    Let $P$ be strongly proper and $T$ be an $\omega_1$-tree.
    Suppose $\dot S$ is a $P$-name for a subtree of $T$ and $p \in P$ and
    let $M$ be a countable elementary submodel of $H_\theta$ for $\theta$
    sufficiently large so that $P$, $T$, $\dot S$, and $p$ are in $M$.
    Set $\delta = M \cap \omega_1$ and $q \leq p$ be a strongly $(M,Q)$-generic.
    Since $q$ forces that  $\dot S_\delta \ne \emptyset$, by extending $q$ if 
    necessary, we may find an $s \in T_\delta$ such that $q$ forces
    $\check s \in \dot S$.
    Since $q$ is strongly proper, there is a $p'$ in $M \cap P$ such
    that $q \leq p' \leq p$ and whenever $p'' \leq p'$ is in $M \cap P$,
    $p''$ is compatible with $p'$.
    Let $U$ consist of all $u \in T$ such that $p'$ forces $\check u \in \dot S$.
    Since $U$ is defineable from parameters in $M$, $U \in M$.
    Since $\dot S$ is forced to be downward-closed, $U$ is downward closed.
    Since clearly $p$ forces $\check U \subseteq \dot S$, it suffices
    to show that $U$ is uncountable.
    By elementarity of $M$, it suffices to show that every strict predecessor
    of $s$ is in $U$.
    Let $u < s$ and note that $q$ forces $\check u \in \dot S$.
    Suppose that $p'$ does not force $\check u \in \dot S$.
    By elementarity of $M$, there is a $p'' \leq p'$ in $P \cap M$ such that
    $p''$ forces $\check u \not \in \dot S$.
    But this is impossible since $p''$ is compatible with $q$.
\end{proof}

\begin{thm} \cite{con_linear_basis2}
    Let $T$ be an $\omega_1$-tree.
    A countable support iteration of proper forcings which preserve subtrees of $T$ preserves subtrees of $T$.
\end{thm}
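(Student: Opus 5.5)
We prove the theorem by induction on the length $\gamma$ of the iteration $\langle P_\alpha, \dot Q_\alpha : \alpha < \gamma\rangle$. The right statement to carry through the induction is a strengthened, generic-condition form of subtree preservation, in the style of the preservation theorems for ``no new branches''. Say $P$ \emph{preserves subtrees of $T$ over $M$} if the following holds. Let $M$ be a suitable countable elementary submodel and $\delta = M\cap\omega_1$. Let $q$ be $(M,P)$-generic, let $\dot S\in M$ be a $P$-name for a subtree of $T$, and let $t\in T_\delta$ with $q\Vdash \check t\in\dot S$. Then there are $q'\le q$ and a subtree $U\in M$ of $T$ (in the ground model) such that $t\in U$ and $q'\Vdash \check U\subseteq \dot S$.

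By elementarity, and by the fact that $\dot S_\delta\neq\emptyset$ is forced, this property implies preservation of subtrees exactly as in the proof of Proposition~\ref{strong_proper_pres_subtree}. Conversely, for a single proper $P$ that preserves subtrees, I would obtain the property as follows. Let $\mathcal{D}$ be the set of conditions deciding some ground-model subtree $U\subseteq\dot S$. It is dense, since $P$ preserves subtrees, and it lies in $M$. Fix $r\le q$ with $r\in\mathcal{D}$ and $r\Vdash$ ``the witness is in $M$''. A witness in $M$ is uncountable and downward closed, but it need not contain $t$. This is the first subtlety.

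To force $t\in U$, I would intersect. We know $q$ forces $t\in\dot S$. Work in $V[G]$ with the subtree $\dot S^{G}$ and restrict to $\dot S^{(t)}$, the set of elements comparable with predecessors of $t$ that have uncountably many extensions in $\dot S$. More simply, consider the name $\dot S' = \{s\in\dot S : s \text{ has uncountably many extensions in } \dot S\}$. Since $t$ has height $\delta$ and $\dot S\in M[G]$, every $u<t$ lies in $\dot S'$ by elementarity of $M[G]$. Then I will apply preservation to the name, in $M$, for the subtree of $\dot S'$ consisting of the elements above $u$, for each $u<t$. A diagonal argument along the countably many $u<t$, using genericity over $M$, yields ground-model subtrees $U_u\in M$ with $u\in U_u$. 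Their union is contained in $\dot S$ and contains all predecessors of $t$. Since $T$ is Hausdorff and the union is decided in $M[G]$ but must be in $V$, one more elementarity step is needed: I would let $U=\{v : \text{some } r\in G\cap M \text{ forces } v\in\dot S\}$ relative to $M$, and check that it is a ground-model set.

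For the iteration itself, the successor step $P_{\alpha+1}=P_\alpha*\dot Q_\alpha$ composes the two-step property. We first apply the property for $\dot Q_\alpha$ in $V[G_\alpha]$, with $M[G_\alpha]$, to get a subtree in $V[G_\alpha]$. We then apply the property for $P_\alpha$ to that name. The limit step is the main obstacle, and I would handle it as in Shelah's and Schlindwein's preservation lemmas. Fix a cofinal sequence $\alpha_n\to\gamma$ in $M$ and enumerate the dense sets of $M$. As usual, build $q_n\in P_{\alpha_n}$ together with names $\dot p_n$ for conditions in $M\cap P_\gamma$ that meet the dense sets. At stage $n$, use the inductive hypothesis at $\alpha_n$ to shrink a $P_{\alpha_n}$-name for a subtree containing the relevant predecessor of $t$ to a $V[G_{\alpha_{n-1}}]$-subtree, and then further back.

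The delicate point is ensuring that, in the limit, a single ground-model subtree containing $t$ survives. My plan is to use the fact that $t$ is fixed throughout and that a decreasing sequence of subtrees containing all predecessors of $t$ has a nonempty intersection in $M$. This is where the argument might need the stronger ``$t\in U$'' clause rather than mere uncountability.
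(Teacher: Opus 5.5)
The paper cites this theorem without proof, so I am judging your argument on its own. There is a genuine gap, and it starts with the strengthened property you carry through the induction.

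\textbf{The strengthened property is false.} You require it of \emph{every} $(M,P)$-generic $q$. By genericity of $q'$, it would give a single $r\in G\cap M$ forcing all predecessors of $t$ into $\dot S$. This already fails for $\sigma$-closed $P$:
\begin{itemize}
\item Let $T$ consist of the finitely supported $0$--$1$ functions on countable ordinals, together with one dead end $\chi_{L_\lambda}$ on each countable limit level $\lambda$, where $L_\lambda$ is a ladder.
\item Let $P$ be the functions $p:\alpha\to 2$ ($\alpha<\omega_1$) ordered by extension, with generic $g$.
\item Let $\dot S=\{s\in T: g\restriction\mathrm{supp}(s)\equiv 0\}$. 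It is always a subtree, since it contains the zero branch.
\end{itemize}
Put $\delta=M\cap\omega_1$ and $L_\delta=\{l_0<l_1<\cdots\}$. For every $f:\omega_1\to\omega_1$ in $M$, $l_{m+1}>f(l_m)$ holds for infinitely many $m$. Otherwise, for some $n$ and any $\gamma<\delta$ above $l_n$ that is a limit of closure points of $f$, every closure point of $f$ in $(l_n,\gamma)$ would lie in the finite set $L_\delta\cap\gamma$.

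Apply this to $f(\gamma)=$ the least length of an extension in $D$ of $p_k$ padded with zeros on $[\mathrm{dom}(p_k),\gamma]$. This lets you meet every dense $D\in M$ without ever writing a $1$ on $L_\delta$. The resulting lower bound $q$ is $(M,P)$-generic and forces $\chi_{L_\delta}\in\dot S$. But each $r\in M$ leaves $g(l)$ undecided for all large $l\in L_\delta$, so no $q'\le q$ and $U\in M$ of the kind you require exist.

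\textbf{Where your derivation breaks.} It fails exactly when you merge the subtrees $U_u$ for $u<t$. The choice $u\mapsto U_u$ depends on $G$. The set $\{v:\exists r\in G\cap M\,(r\Vdash v\in\dot S)\}$ equals $\bigcup_{r\in G\cap M}U_r$, where $U_r=\{v:r\Vdash v\in\dot S\}$. This is not one ground-model subtree forced in by one condition. What you need is a single $U_r$ with $r\in G\cap M$ containing all predecessors of $t$, and that is precisely what the example defeats.

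A property of this kind can only be asked of suitably chosen generic conditions:
\begin{itemize}
\item For $\sigma$-closed $P$, build the generic sequence so that for each pair $(\dot S,t)$ some condition in $M$ either forces some $u<t$ out of $\dot S$ or forces every $u<t$ into it. Elementarity guarantees one of the two is possible.
\item For strongly proper $P$, use residues as in Proposition~\ref{strong_proper_pres_subtree}.
\end{itemize}

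\textbf{The limit step is not carried out.} Even with a corrected, existential hypothesis, the limit step is the real content of the theorem, and you do not do it. $\dot S$ is not a $P_{\alpha_n}$-name. The natural $P_{\alpha_n}$-names, such as the set of nodes the current tail condition forces into $\dot S$ over $V[G_{\alpha_n}]$, are not shown to be uncountable, and showing it would already be an instance of what is being proved. So the inductive hypothesis has nothing to act on.

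Your closing device does not help. A decreasing sequence of subtrees containing the predecessors of $t$, chosen along the generic, is not an element of $M$, and its intersection need not be uncountable. The conditions witnessing the different stages are different members of $G\cap M$, with in general no common extension in $M$. What has to be produced is one condition in $G\cap M$ forcing a ground-model subtree (containing the predecessors of $t$) into $\dot S$. The proposal has no mechanism that produces it at limit stages.
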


Finally, we recall a proposition which makes explicit the connection of isomorphisms between lexicographically ordered subsets of A-trees and club-isomorphisms between the subtrees they generate.
Such an implication seems at least implicit in \cite{AbrahamShelah}.
A special case was stated in \cite[Lemma 2.9]{minimal_unctbl} where the proof was left to the reader.
To our knowledge the first proof appears in \cite{Peng_thesis}
it was proved in the generality of $\kappa$-A-lines and $\kappa$-A-trees in \cite[Lemma 5.2]{Shalev}.

\begin{prop} \label{lo_to_club-iso} 
    Suppose that $T_0,T_1 \subseteq \omega^{<\omega_1}$ are A-trees and for each $i=0,1$,
    $X_i \subseteq T_i$ is uncountable.
    If $(X_0,\leq_\lex)$ is isomorphic to $(X_1,\leq_\lex)$, then 
    then $S_0$ and $S_1$ are club-isomorphic, where $S_i$ consists of all $s \in T_i$ having uncountably many
    extensions in $X_i$.
\end{prop}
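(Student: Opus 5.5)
The plan is to let $f:(X_0,\leq_\lex)\to(X_1,\leq_\lex)$ be the given isomorphism and build the club isomorphism by restricting to levels $\delta=M\cap\omega_1$ for countable elementary submodels $M\prec H_\theta$ containing $T_0,T_1,X_0,X_1,f$. Let $C$ be the club of such $\delta$. For $\delta\in C$ and $s\in (S_0)_\delta$, define
$$g(s)=f(x)\restriction\delta,$$
where $x\in X_0$ is any element extending $s$ with $f(x)$ of height $\geq\delta$. There are uncountably many such $x$, since $X_0\cap M$ and $X_1\cap M$ are countable. The resulting map on $S_0\restriction C$ will be the club isomorphism, provided it is well defined, lands in $S_1$, and is a bijection that preserves order. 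The arrangement is symmetric: the same construction applied to $f^{-1}$ should give the inverse.

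The key tool is a cut argument. Since $f\in M$, $f$ maps $X_0\cap M$ onto $X_1\cap M$ and preserves $\leq_\lex$. So if $x,y\in X_0$ determine the same Dedekind cut in $X_0\cap M$, then $f(x),f(y)$ determine the same cut in $X_1\cap M$. First I will check that any two $x,y\in X_0$ of height $\geq\delta$ extending the same $s\in T_0$ of height $\delta$ determine the same cut in $X_0\cap M$. This holds because every $z\in X_0\cap M$ has height $<\delta$: either $z$ is an initial segment of $s$ and then lies lex-below both, or $z$ splits off from $s$ below $\delta$ and then compares with $x$ and $y$ in the same way.

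The converse is the main obstacle. I need that two elements $a,b\in X_1$ of height $\geq\delta$, with $a\restriction\delta$ and $b\restriction\delta$ lying in $S_1$ and inducing the same cut in $X_1\cap M$, satisfy $a\restriction\delta=b\restriction\delta$. Suppose instead that they first differ at $\xi<\delta$, with $a<_\lex b$, and set $u=a\restriction(\xi+1)\in S_1\cap M$. The cone $X_1(u)=\{w\in X_1:u\leq w\}$ is uncountable and belongs to $M$. Every element of $X_1(u)\cap M$ lies lex-below $b$, so by the same-cut hypothesis every such element also lies lex-below $a$. My first attempt would exploit this. By elementarity, the lex-supremum of $X_1(u)\cap M$ traces out a path through $T_1\restriction\delta$ that $a$ follows. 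I then want to show that in $M$ the set of elements of $X_1(u)$ lying lex-below such "right-most" positions is unbounded in height, so that $M$ would see a cofinal branch through $T_1$, contradicting that $T_1$ is Aronszajn. A cleaner alternative is to argue inside $M$ with the right-most branch of the pruned tree $S_1\cap\{v:u\leq v\}$ in the lexicographic order. That branch cannot be uncountable, so some node $v\in M$ along $a\restriction\delta$ has an $S_1$-successor to the right of $a$, and this successor has elements of $X_1\cap M$ strictly between $a$ and $b$, which gives the contradiction.

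With that in hand, the remaining steps are routine. To show $g(s)\in S_1$, note that uncountably many $x$ extending $s$ have images extending $g(s)$, and at most countably many $x$ have images outside the $S_1$-part. To show $g$ is a bijection onto $(S_1)_\delta$, apply the symmetric map obtained from $f^{-1}$. For $\gamma<\delta$ both in $C$, $g(s\restriction\gamma)=g(s)\restriction\gamma$, because the same witness $x$ works at both levels; hence $g$ preserves the tree order on $S_0\restriction C$, which is the required club isomorphism.
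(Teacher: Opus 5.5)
The paper does not prove this proposition itself; it refers to \cite{Peng_thesis} and \cite[Lemma 5.2]{Shalev}, so there is no in-paper argument to compare against. Your elementary-submodel argument is correct and self-contained. It rests on three observations:
\begin{enumerate}[(i)]
\item all $x\in X_0\setminus M$ above a fixed $s\in (T_0)_\delta$ induce the same cut in $X_0\cap M$;
\item $f\in M$ transports cuts in $X_0\cap M$ to cuts in $X_1\cap M$;
\item elements of $X_1$ of height $\geq\delta$ inducing the same cut in $X_1\cap M$ have the same restriction to $\delta$.
\end{enumerate}
Together these show that $x\restriction\delta\mapsto f(x)\restriction\delta$ is a well-defined bijection $(S_0)_\delta\to(S_1)_\delta$ commuting with restriction, which is the club isomorphism.

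A few points should be tightened.

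\emph{Matching the lemma to the definition of $g$.} Your key lemma assumes $a\restriction\delta,b\restriction\delta\in S_1$, but you define $g(s)$ using any witness $x$. The gap closes because the hypothesis is automatic below $\delta$ and unnecessary at $\delta$. If $a\in X_1$ has height $\geq\delta$ and $a\restriction\eta\notin S_1$ for some $\eta<\delta$, then $a$ lies in the countable set of extensions of $a\restriction\eta$ in $X_1$. That set is in $M$, hence contained in $M$, which is impossible. So the lemma holds for all $a,b\in X_1\setminus M$. Every $x\in X_0$ extending $s$ is then a valid witness, since $x\notin M$ iff $f(x)\notin M$. Finally, $g(s)\in S_1$ because uncountably many $x$ extend $s$.

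\emph{Making ``the right-most branch'' precise.} A node of $S_1$ need not have a lexicographically largest immediate successor, so the phrase needs a definition. Let $R\in M$ be the set of $t\in S_1$ with $u\leq t$ such that no $t'\in S_1$ with $u\leq t'$ branches off to the right of $t$.
\begin{itemize}
\item Of two incomparable nodes of $S_1$ above $u$, one branches off to the right of the other, so $R$ is a chain.
\item Since $T_1$ is Aronszajn, $R$ is countable, so the heights of its elements are bounded by an ordinal in $M$, hence below $\delta$.
\item So some $a\restriction\eta$ with $\xi<\eta<\delta$ lies in $S_1\setminus R$, and elementarity gives a witness $t'\in M$.
\item Any $w\in X_1\cap M$ extending $t'$ then satisfies $a<_\lex w<_\lex b$, which contradicts the same-cut hypothesis.
\end{itemize}
Your first ``lex-supremum'' attempt can be dropped.

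\emph{The club.} Take $C=\{M_\alpha\cap\omega_1:\alpha<\omega_1\}$ for a continuous $\in$-chain of countable elementary submodels, so that $C$ is visibly club.
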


\section{Forcing A-trees to be club isomorphic with a c.c.c. poset}
\label{forcing_clubiso:sec}

In this section we recall a standard poset to make two make two A-trees club-isomorphic and establish a criteria
for when it is ccc.
If $T$ is an A-tree and $E \subseteq \omega_1$ is closed, define
$$\Delta_E(s,t):= \sup\{ \delta \in E \cup \{0\}: \delta \leq \min(\Ht(s),\Ht(t)) \mand s \restriction \delta = t \restriction \delta\}$$
whenever $s,t \in T$.
Notice that if $\Ht(s),\Ht(t) \in E$, then $\Delta_E(s,s) = \Ht(s)$ and
$\Delta_E(s,t) = \Ht(s)$ if and only if $s \leq t$.
Thus any (possibly partial) function $f$ from $S \restriction E$ to $T \restriction E$ which preserves $\Delta_E$---i.e. 
$\Delta_E(f(s),f(t)) = \Delta_E(s,t)$ for all $s,t \in \dom(f)$---
necessarily preserves order and height.
Also observe that while $\Delta_E$ depends on $T$ and its order,
if $T$ is represented as a subtree of $\omega^{<\omega_1}$ ordered by extension,
then the definition of $\Delta_E$ becomes independent of $T$.
For this reason, we will suppress the dependence on $T$.

If $S$ and $T$ are A-trees contained in $\omega^{<\omega_1}$ and $E \subseteq \omega_1$ is club,
define $Q_{S,T,E}$ to be the set of all finite partial functions $q:S \restriction E \to T \restriction E$
which preserve $\Delta_E$.
For reasons noted above, elements of $Q_{S,T,E}$ preserve levels
and satisfy that $s \leq t$ if and only if $q(s) \leq q(t)$.
In particular they are injective.

\begin{prop} \label{density}
    Suppose that $S$ and $T$ are pruned A-trees and $E \subseteq \omega_1$ is club such that every element of $S \restriction E$ has
    infinitely many immediate successors in $S \restriction E$ and similarly for $T \restriction E$.
    If $S$ is pruned, then $\{q \in Q : s \in \dom(q)\}$ is dense in $Q_{S,T,E}$ whenever $s \in S \restriction E$.
    If $T$ is pruned, then $\{q \in Q : t \in \ran(q)\}$ is dense in $Q_{S,T,E}$ whenever $t \in T \restriction E$.
\end{prop}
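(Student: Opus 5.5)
Given $q \in Q_{S,T,E}$ and $s \in S\restriction E$ with $s \notin \dom(q)$, we will find $t \in T\restriction E$ such that $q' = q \cup \{(s,t)\}$ preserves $\Delta_E$; this is an extension of $q$ containing $s$. The range statement is symmetric, since $q^{-1} \in Q_{T,S,E}$, so we only treat the domain case. Let $\alpha = \Ht(s) \in E$.

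The key is to compute what $\Delta_E(t,q(u))$ must equal for each $u \in \dom(q)$. Let $\beta = \max\{\Delta_E(s,u) : u \in \dom(q)\}$; this is an element of $E\cup\{0\}$ and is at most $\alpha$. If $\dom(q)$ is empty, any $t$ of height $\alpha$ works. Fix $u_0 \in \dom(q)$ attaining $\beta$, and let $s' = s \restriction \beta$. Then $s' \le u_0$ and $s' \in S\restriction E$ (or $s'$ is the root when $\beta=0$; this case is handled the same way with the root in place of $s'$). Put $t' = q(u_0)\restriction\beta$. For any $u$ with $\Delta_E(s,u)=\gamma\le\beta$, the ultrametric property of $\Delta_E$ gives $\Delta_E(s,u)=\min(\beta,\Delta_E(u_0,u))$ whenever the branching at $\beta$ is handled correctly, and $\Delta_E$-preservation of $q$ transfers this to $T$.

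\emph{Case $\beta = \alpha$.} Then $s \le u_0$, and we take $t = q(u_0)\restriction\alpha$. For any $u$, $\Delta_E(s,u)=\min(\alpha,\Delta_E(u_0,u))$. Likewise $\Delta_E(t,q(u))=\min(\alpha,\Delta_E(q(u_0),q(u)))$. These agree because $q$ preserves $\Delta_E$.

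\emph{Case $\beta<\alpha$.} Let $\beta^+ = \min(E\setminus(\beta+1))$, and let $s'' = s\restriction\beta^+$, an immediate successor of $s'$ in $S\restriction E$. By maximality of $\beta$, no $u \in \dom(q)$ extends $s''$. The images $q(u)$ meet only finitely many immediate successors of $t'$ in $T\restriction E$. Since $t'$ has infinitely many such successors, we choose one, $t''$, that lies below no $q(u)$ and is not an initial segment of any $q(u)$. Using that $T$ is pruned, we take $t \ge t''$ of height $\alpha$. Then $\Delta_E(t,q(u)) = \min(\beta,\Delta_E(q(u_0),q(u)))$ for each $u$, and this matches $\Delta_E(s,u)$.

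The main obstacle is verifying the ultrametric identity $\Delta_E(x,z)=\min(\Delta_E(x,y),\Delta_E(y,z))$ whenever $\Delta_E(x,y)\neq\Delta_E(y,z)$. We also need to confirm that when $s$ and $u$ split at $\beta$ in $E$-terms, their $E$-meet is exactly $\beta$. Both facts hold because $\Delta_E(x,y)$ is the largest element of $E\cup\{0\}$ that is at most $\Delta(x,y)$ (and at most the heights), which is a monotone function of the ordinary meet. Finally, we note that the pruning hypothesis on $S$ is not needed in the domain case; it enters only through the symmetric range case.
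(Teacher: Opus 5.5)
Your proof is correct and is essentially the paper's argument. Your case $\beta=\alpha$ is the paper's restriction claim. In the case $\beta<\alpha$, the paper first adjoins $(s\restriction\beta,\,q(u_0)\restriction\beta)$ to $q$ and computes through that node, whereas you compute through $u_0$ directly using the isosceles property of $\Delta_E$; this is only a cosmetic difference.

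One caveat, which the paper shares: the case $\beta=0$ tacitly needs the root to have infinitely many immediate successors in $T\restriction E$, i.e.\ $T_{\min E}$ infinite. This holds if $0\in E$, but it is not literally part of the stated hypothesis.
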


\begin{proof}
    We'll begin with the following claim.

    \begin{claim} \label{restriction_claim}
    If $q \in Q_{S,T,E}$, $t \in \dom(q)$, and $\delta \in E$ with $\delta < \Ht(t)$,
    then $q \cup \{(t \restriction \delta,q(t) \restriction \delta)\}$ is in $Q_{S,T,E}$.
\end{claim}

\begin{proof}
    We must show that if $t' \in \dom(q)$, then $\Delta_E(t',t\restriction \delta) = \Delta_E(q(t'),q(t) \restriction \delta)$.
    Notice that in general $\Delta_E(u,v \restriction \delta) = \min(\Delta_E(u,v),\delta)$.
    Thus
    $$\Delta_E(t',t \restriction \delta) = \min(\Delta_E(t',t),\delta) = \min(\Delta_E(q(t'),q(t)),\delta) = 
    \Delta_E(q(t'),q(t) \restriction \delta).$$
\end{proof}

    Let $p \in Q_{S,T,E}$ and $s \in S \restriction E$ be given.
    Let $\delta = \max \{\Delta(s',s) : s' \in \dom(q)\}$.
    By applying Claim \ref{restriction_claim} and extending $q$ if necessary,
    we may assume that $s \restriction \delta$ is in $\dom(q)$.
    If $s = s \restriction \delta$, we are done.
    Otherwise let $\delta^+$ be the least element of $E$ greater than $\delta$,
    noting that $\Ht(s) \geq \delta^+$.
    By assumption, there are infinitely many elements of $T_{\delta^+}$
    which extend $q(s \restriction \delta)$.    
    Let $t \in T$ have the same height as $s$, extend $q(s \restriction \delta)$ and
    be incompatible with every element of the range of $q$ having height greater than $\delta$.
    Thus $\Delta_E(s',s) \leq \delta$ whenever $s'$ is in the range of $q$.

    It suffices to show that $q \cup \{(t,s)\}$ is in $Q_{S,T,E}$.
    To see this suppose that $s' \in \dom(q)$, noting that $\Delta_E(s,s') \leq \delta$.
    Thus
 \begin{align*}
   \Delta_E(s,s') = \Delta_E(s \restriction \delta,s')  = \Delta_E(q(s \restriction \delta),q(s'))
    =\Delta_E(t \restriction \delta,q(s'))
    &= \min(\Delta_E(t,q(s')),\delta) \\ &= \Delta_E(t,q(s')).
\end{align*}
    The last equality holds since $q(s')$ is in the range of $q$ and hence
    $\Delta_E(t,q(s')) \leq \delta$.
    The remaining density assertion holds by reversing the roles of $S$ and $T$.
\end{proof}

If $U$ is a subtree of $T_0 \otimes \ldots \otimes T_{n-1}$
define $f_U:\omega_1 \to \omega_1$ by
$f_U(\alpha):= \beta$ if $\beta$ is minimal such that for some $u,v \in U$ with for all $i < n$, $u(i)$ is incomparable with
$v(i)$ and $\Delta(u(i),v(i)) \geq \alpha$, then
for all $i < n$, $\Delta(u(i),v(i)) \leq \beta$.
If $S$ and $T$ are A-trees, let $\Fcal_{S,T}$ be all functions of the form $f_U$ such that
$U$ is a subtree of $(S \otimes T)^{\otimes n}$, viewed as a product of $2n$ A-trees.

If $E \subseteq \omega_1$ is club and $\alpha < \omega_1$, it will be convenient to let $\next(E,\alpha)$ be the least element of $E$ greater than $\alpha$.
We will say that a club $E$ is \emph{sufficiently fast} with respect to A-trees $S$ and $T$ if for every
$f \in \Fcal_{S,T}$ there is an $\alpha \geq \min(E)$ such that
$f(\alpha) < \next (E,\alpha)$.
Equivalently, $E$ does not contain a tail of the club of $f$-closed ordinals for any $f \in \Fscr_{S,T}$.
Observe that if $U$ is a subtree of $(S \otimes T)^{\otimes n}$ and $V$ is a subtree of $U$,
then $f_U \leq f_V$.
In particular, if $\Vscr$ is a collection of subtrees of finite powers of $S \otimes T$ and every subtree of
a finite power of $S \otimes T$ contains an element of $\Vscr$,
then $\{f_V : V \in \Vscr\}$ is cofinal in $\Fscr_{S,T}$.
It follows that if $E$ is fast enough for $S$ and $T$, then this remains true in any subtree-preserving forcing extension.

\begin{rem}
    A similar definition (``\emph{fast enough club}'') appeared in \cite{univ_A-line}.
\end{rem}

\begin{thm} \label{QSTE_ccc}
    Suppose that $S$ and $T$ are A-trees with no Souslin subtrees.
    If $E$ is a club which is sufficiently fast with respect to $S$ and $T$,
    then $Q_{S,T,E}$ is c.c.c..
\end{thm}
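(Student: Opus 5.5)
Let $\langle q_\xi : \xi < \omega_1\rangle$ be an uncountable family in $Q_{S,T,E}$; we want two compatible members. The plan is the usual $\Delta$-system plus uniformization argument. Afterwards, the fact that $E$ is sufficiently fast will let us find two conditions whose new parts split off between consecutive points of $E$, so that their union still preserves $\Delta_E$.

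First refine the family. By a $\Delta$-system argument we may assume the domains form a $\Delta$-system with root $r$, and that $q_\xi \restriction r$ is the same for all $\xi$. We may also assume every $q_\xi$ has the same size $n$ off the root and the same pattern: the same order type of heights, the same positions relative to $r$, and the same $\Delta_E$-values among the points in the union of its domain and range. The heights of the nonroot parts should increase with $\xi$ and lie above all root heights. Next choose a countable elementary submodel $M$ containing everything, with $\delta = M\cap\omega_1 \in E$. List the nonroot pairs of $q_\xi$ as $(s^\xi_i, t^\xi_i)$ for $i<n$. Each $q_\xi$ then gives an element $u_\xi = \langle (s^\xi_i\restriction\gamma_\xi, t^\xi_i\restriction\gamma_\xi) : i<n\rangle$ of $(S\otimes T)^{\otimes n}$, where $\gamma_\xi$ is the minimum height of the nonroot part. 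Truncate further to some fixed level if needed so that the pieces lie on a common level. Let $U$ be the set of $v$ having uncountably many $u_\xi$ above them. Then $U$ is a subtree of $(S\otimes T)^{\otimes n}$, and we may take it pruned.

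We may also assume $U$ is everywhere splitting coordinatewise, i.e. above any node there are two extensions that are incomparable in every coordinate. This is where "no Souslin subtrees" enters. Applied to the finite product, the standard fact is that a product of trees without Souslin subtrees has, above every node, coordinatewise-incomparable splittings. If the hypothesis only gives this for $S$ and $T$ individually, I would argue coordinate by coordinate, using that each coordinate projection of a subtree is a subtree with no Souslin subtree. I expect this step to need care, because with $2n$ coordinates the products need not themselves lack Souslin subtrees. The fallback is to reduce to the case where $U$'s projections are pruned, and to obtain incomparable pairs from the fact that $f_U$ is well defined (it is total) together with elementarity.

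Now use the sufficiently fast hypothesis. Apply it to $f_U\in\Fcal_{S,T}$. This gives $\alpha\ge\min E$ with $f_U(\alpha)<\next(E,\alpha)$, and by elementarity we may take $\alpha \in M$ above the heights of the root. Let $\alpha^* = \max(E\cap(\alpha+1))$. Choose $u,v\in U$ that are coordinatewise incomparable with all splitting heights in $[\alpha, f_U(\alpha)]$, and hence strictly between $\alpha^*$ and $\next(E,\alpha)$. Lift these to $\xi<\eta$ with $u_\xi \ge u$ and $u_\eta\ge v$. Then for every $i,j$ the values $\Delta_E(s^\xi_i, s^\eta_j)$ and $\Delta_E(t^\xi_i,t^\eta_j)$ both equal $\Delta_E$ computed at level $\alpha^*$, namely the $E$-point below which the two conditions still agree. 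For $i=j$ this holds because the splitting happens in the same $E$-interval in both trees. For $i\neq j$ it follows from the uniform pattern together with the $\Delta_E$ computation at $\alpha^*$, using the identity $\Delta_E(u, v\restriction\delta) = \min(\Delta_E(u,v),\delta)$ from Claim~\ref{restriction_claim}.

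Finally, verify that $q_\xi\cup q_\eta$ preserves $\Delta_E$. Pairs inside one condition, or pairs involving the root, are handled by the uniformization. Cross pairs are handled by the previous paragraph. The main obstacle is this last synchronization: the splitting in $S$ and the splitting in $T$ must occur in the same interval of $E$, and this is exactly what $f_U(\alpha)<\next(E,\alpha)$ provides.
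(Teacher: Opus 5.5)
The gap is in the cross-slot step ($i\neq j$). This is exactly the phenomenon described in the Remark following Theorem~\ref{MAomega1+coherent_A-trees_noniso}.

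First, your normalization is not available. The splitting levels between distinct non-root nodes of a single $q_\xi$ can be unbounded in $\omega_1$; for instance, when $\gamma_\xi$ is a successor point of $E$ they may lie in $[\max(E\cap\gamma_\xi),\gamma_\xi)$. So no $\Delta$-system or pigeonhole argument makes the $\Delta_E$-values constant. Since the $\gamma_\xi$ need not form a stationary set, you cannot press down either.

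More seriously, the step fails even when those values \emph{are} constant. $\Delta_E$-preservation synchronizes the $S$-splitting and the $T$-splitting of two slots only up to an interval of $E$, and you only take $\alpha$ above the root heights. Concretely, take $n=2$, empty root, and for all $\xi$ let $\Delta(s^\xi_0,s^\xi_1)=\beta^S$ and $\Delta(t^\xi_0,t^\xi_1)=\beta^T$, where $\alpha^*\le\beta^S<\alpha\le\beta^T\le f_U(\alpha)<\mathrm{next}(E,\alpha)$. This is legal, since both pairs have $\Delta_E$-value $\alpha^*$. Write $u=\langle(a_i,b_i):i<2\rangle$ and $v=\langle(a'_i,b'_i):i<2\rangle$.
\begin{itemize}
\item Nothing in your choice of $u,v$ excludes $b'_0=b_1$ and $b'_1=b_0$, i.e.\ the $T$-coordinates of $v$ are those of $u$ swapped at level $\beta^T$. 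All same-slot splittings still lie in $[\alpha,f_U(\alpha)]$.
\item Since $a'_1$ agrees with $a_1$ below $\alpha>\beta^S$, we get $\Delta(s^\xi_0,s^\eta_1)=\beta^S$, so $\Delta_E(s^\xi_0,s^\eta_1)=\alpha^*$.
\item But $t^\xi_0$ and $t^\eta_1$ both extend $b_0$ and may agree well beyond $\mathrm{next}(E,\alpha)$. Hence $q_\xi\cup q_\eta\notin Q_{S,T,E}$.
\end{itemize}
The identity from Claim~\ref{restriction_claim} cannot repair this. The issue is which slot of $q_\eta$ sits above which node of $u$, not restriction.

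What is missing is a single $\zeta\in E$, common to all conditions, below which the non-root nodes of each condition are already pairwise separated, and at which all conditions have the same trace. The paper gets it as follows. For $\xi$ ranging over limit points of $E$, it replaces $q_\xi$ by the condition $p_\xi$ obtained by restricting the nodes of height $\ge\xi$ to level $\xi$, and then presses down:
\begin{itemize}
\item the part of $q_\xi$ below $\xi$ becomes constant;
\item a fixed $\zeta\in E$ bounds all splittings inside $\dom(p_\xi)$, and hence, since $\zeta\in E$, inside $\ran(p_\xi)$;
\item the traces at $\zeta$ are then fixed by pigeonhole (condition~(\ref{fixed_stem})).
\end{itemize}
After this, cross-slot pairs split below $\zeta$ exactly as they do within one condition. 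Every coordinatewise incomparable pair in the downward closure $U$ splits at or above $\zeta$, so any $\alpha$ with $f_U(\alpha)<\mathrm{next}(E,\alpha)$ is automatically $\ge\zeta$. Only same-slot pairs remain, and your interval argument handles those correctly.

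Two smaller points. The \emph{sufficiently fast} hypothesis yields only one $\alpha$, so elementarity does not let you raise it; you would pass to a cone $V\subseteq U$ above a high node and use $f_U\le f_V$. The Souslin step is not delicate: an uncountable subset of a tree with no Souslin subtree contains an uncountable antichain, so refining one coordinate at a time gives coordinatewise antichains.
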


\begin{proof}
    Let $S$, $T$, and $E$ be as in the statement of the theorem.
    Suppose that $\{q_\xi : \xi \in \omega_1\}$ is an uncountable antichain of $Q_{S,T,E}$.
    For each $\xi \in E$, let $p_\xi$ be the element of $Q_{S,T,E}$ with domain
    $$\{s \restriction \xi : s \in \dom(q_\xi) \mand \Ht(s) \geq \xi\}$$
    defined by $p_\xi(s \restriction \xi) = q_\xi(s) \restriction \xi$.
    By applying the pressing down lemma, the pigeonhole principle, and using our assumption
    that $S$ and $T$ are have no Souslin subtrees,
    there is an uncountable $\Xi \subseteq E$ and
    $\zeta \in E$ such that:
    \begin{enumerate}
        \item \label{root} $\xi \mapsto q_\xi \restriction
    \{s \in \dom(q_\xi) : \Ht(s) < \xi\}$ is constant;

        \item \label{fixed_stem} $\xi \mapsto \{(s \restriction \xi,t\restriction \xi) : (s,t) \in p_\xi\}$ is constant
        and the constant value has the same cardinality as the common value of $|p_\xi|$;

        \item \label{dom_ran_antichain}
        if $\xi \ne \eta$ are in $\Xi$,
        every element of $\dom(p_\xi)$ is incompatible with every element of
        $\dom(p_\eta)$ and every element of $\ran(p_\xi)$ is incompatible with
        every element of $\ran(p_\eta)$.
    
    \end{enumerate}
    In particular, $\dom(p_\xi)$ has a cardinality $n$ which does not depend on $\xi$ and $(s,t) \mapsto (s \restriction \zeta,t \restriction \zeta)$ is one-to-one on $q_\xi$ for all $\xi$.
    
    \begin{claim} 
    $\{p_\xi : \xi \in \Xi\}$ is an antichain.
    \end{claim}
    
    \begin{proof}
    Suppose that $\xi \ne \eta$ and
    let $s \in \dom(q_\xi)$ and $s'\in\dom(q_\eta)$ be such
    that $\Delta_E(s,s') \ne \Delta_E(q_\xi(s),q_\eta(t))$.
    Note that $s \in \dom(q_\xi) \setminus \dom(q_\eta)$ and
    $s' \in \dom(q_\eta) \setminus \dom(q_\xi)$.
    
    If $s \restriction \zeta \ne s' \restriction \zeta$,
    then there would be a $s'' \in \dom(q_\xi)$ with
    $$
    (s'' \restriction \zeta,q_\xi(s'') \restriction \zeta) =
    (s' \restriction \zeta,q_\eta(x') \restriction \zeta).$$
    Then $$\Delta_E(s,s') = \Delta_E(s,s'') = \Delta_E(q_\xi(s),q_\xi(s'')) < \zeta$$
    and therefore $\Delta_E(q_\xi(s),q_\xi(s'')) = \Delta_E(q_\xi(s),q_\eta(s''))$.
    Since this implies $$\Delta_E(s,s') = \Delta_E(q_\xi(s),q_\xi(s')),$$
    this is constriction.
    
    Thus it must be that $s \restriction \zeta = s' \restriction \zeta$.
    Since $s$ is incomparable with $s'$,
    \end{proof}

    Let's define an uncountable antichain $\{p_\xi : \xi \in \Xi\}$ satisfying $\dom(p_\xi) \subseteq S_\xi$ for 
    all $\xi \in \Xi$ as well as (\ref{fixed_stem}) and (\ref{dom_ran_antichain}) as \emph{simple};
    we have shown that if $Q_{S,T,E}$ has an uncountable antichain, it has an uncountable simple antichain.
    
    Let $U \subseteq (S \otimes T)^{\otimes n}$ be the downward closure of 
    $\{p_\xi : \xi \in \Xi\}$, where each $p_\xi$ is viewed as a tuple listing its graph with the domain
    coordinates listed with respect to a fixed lexicographic order on $S$.
    It suffices to show that there is no $\alpha \geq \min (E)$ such that
    $f(\alpha)$ is less than the least element of $E$ greater than $\alpha$.
    Toward this end, let $\alpha$ be given and let $\delta = \sup(E \cap \alpha)$.
    Let $u,v \in U$ be such that for all $i < n$, $u(i)$ is incompatible with $v(i)$,
    $\Delta(u(i),v(i)) \geq \alpha$ for all $i< n$ and $f(\alpha) = \max_{i < n} \Delta(u(i),v(i))$.
    Let $\xi,\eta \in \Xi$ be such that $p_\xi$ extends $u$ and $p_\eta$ extends $v$.
    Since $p_\xi \cup p_\eta$ is not in $Q_{S,T,E}$, there is a $\gamma \in E$ and $s \in \dom(p_\xi)$ and $s' \in \dom(p_\eta)$
    such that $\Delta(s,s') < \gamma$ iff $\Delta(p_\xi(s),p_\eta(s')) > \gamma$.
    Note that by (\ref{fixed_stem}) there is an $i < n$ such that
    $s$ extends $u(i)$ and $s'$ extends $v(i)$.
    Similarly, for some $j < n$, $p_\xi(s)$ extends $u(j)$ and $p_\eta(s')$ extends $v(j)$.
    Thus we have that
    $$\alpha \leq \min_{k < n} \Delta(u(k),v(k)) < \gamma < \max_{k < n} \Delta(u(k),v(k)) = f_U(\alpha).$$
    In particular, $\next(E,\alpha) < f_U(\alpha)$.
\end{proof}

\begin{cor} \label{coherent_club-min} \cite[Lemma 1.6]{l-maps}
    Assume $\MA_{\aleph_1}$.
    If $T$ is a coherent A-tree with no Souslin subtrees and $S$ is a subtree of $T$,
    then $T$ club-embedds into $S$.
    If $S$ and $T$ are both pruned, then they are club-isomorphic.
\end{cor}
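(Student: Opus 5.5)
The plan is to realize the club-embedding as a total, $\Delta_E$-preserving map obtained from a filter on the poset $Q_{T,S,E}$ of Section \ref{forcing_clubiso:sec}, with $T$ in the domain role and $S$ in the range role. Three things are needed: a suitable club $E$, the c.c.c.\ of $Q_{T,S,E}$ via Theorem \ref{QSTE_ccc}, and $\aleph_1$ dense sets to which $\MA_{\aleph_1}$ is applied.

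\emph{Setting up.} Since $S\subseteq T$ and $T$ is coherent with no Souslin subtrees, $S$ is also coherent with no Souslin subtrees. For the first assertion, replace $S$ by its pruned part (the nodes with uncountably many extensions in $S$), which is still a subtree of $T$. If needed, also pass to a club-many-levels version of $T$ so that the hypotheses of Proposition \ref{density} can be met. Then choose a club $E$ such that every node of $S\restriction E$ and of $T\restriction E$ has infinitely many immediate successors in the respective restricted tree. This is possible for pruned A-trees by a closing-off argument: for each node there is a level above which it has infinitely many extensions.

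\emph{Key step: every club is sufficiently fast for $T$ and $S$.} This is where coherence is used, and it is the step I expect to be the main obstacle. Let $U$ be a subtree of $(T\otimes S)^{\otimes n}$, which is a subtree of a product of $2n$ coherent trees, all of them subtrees of the single coherent tree $T$. Pick $u_\xi\in U$ of height $\xi$ for uncountably many $\xi$. For each coordinate $i$, the set $F_\xi(i)=\{\eta<\xi : u_\xi(i)(\eta)\neq u_\xi(0)(\eta)\}$ is finite, by coherence of $T$.

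Apply the pressing-down lemma and a $\Delta$-system argument to thin to an uncountable $\Xi$ with a root $\rho$ such that:
\begin{itemize}
\item the sets $F_\xi(i)\cap\rho$ do not depend on $\xi$;
\item the values of the $u_\xi(i)$ on these sets do not depend on $\xi$;
\item the sets $F_\xi(i)\setminus\rho$ lie above $\rho$ and are pairwise disjoint across $\xi$.
\end{itemize}
Further thinning, using that $T$ has no Souslin subtrees (so the tails are pairwise incomparable in every coordinate), should give, for $\xi\neq\eta$ in $\Xi$ with large common splitting, that $\Delta(u_\xi(i),u_\eta(i))=\Delta(u_\xi(0),u_\eta(0))$ for all $i<2n$. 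The point is that above $\rho$ each $u_\xi(i)$ is $u_\xi(0)$ modified on a finite set lying above that splitting point. The delicate part is controlling where the sets $F_\xi(i)\setminus\rho$ sit relative to $\Delta(u_\xi(0),u_\eta(0))$; I would try to handle this by first fixing the heights $\max F_\xi(i)$ relative to a countable elementary submodel.

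Granting this, let $\alpha\geq\min E$ be such a common value $\Delta(u_\xi(0),u_\eta(0))$; there are arbitrarily large ones, since the $u_\xi(0)$ form an uncountable subset of an A-tree. Then $f_U(\alpha)\le\alpha<\next(E,\alpha)$. Hence $E$ is sufficiently fast, and Theorem \ref{QSTE_ccc} shows that $Q_{T,S,E}$ is c.c.c.

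\emph{Conclusion.} By Proposition \ref{density}, for each $t\in T\restriction E$ the set $\{q : t\in\dom(q)\}$ is dense, and this uses only that $S$ is pruned. These are $\aleph_1$ many dense sets, so $\MA_{\aleph_1}$ yields a filter meeting all of them. Its union is a $\Delta_E$-preserving map $T\restriction E\to S\restriction E$. Such a map preserves height and order in both directions, so it is a club-embedding of $T$ into $S$.

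For the second assertion, where both trees are pruned, also meet the dense sets $\{q : s\in\ran(q)\}$ for $s\in S\restriction E$, again by Proposition \ref{density}. The resulting map is onto $S\restriction E$, so it is a club-isomorphism.
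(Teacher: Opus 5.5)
Your architecture is the intended one. The paper records this as a corollary of Theorem \ref{QSTE_ccc} without a separate argument, and the derivation runs: every club is sufficiently fast for $T$ and a subtree $S$, so $Q_{T,S,E}$ is c.c.c., and then Proposition \ref{density} and $\MA_{\aleph_1}$ finish. Your setup and conclusion are fine, including the point that the domain-density sets only need the range tree to be pruned and split along $E$.

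\textbf{The gap.} The key step is only asserted (``should give \dots\ granting this''), and the route you sketch does not work as stated. After a $\Delta$-system thinning, the tails $G_\xi(i)=F_\xi(i)\setminus\rho$ are in general nonempty. Suppose some $\gamma\in G_\xi(i)$ lies below $\delta=\Delta(u_\xi(0),u_\eta(0))$. Then $\gamma\notin F_\eta(i)$, so $u_\xi(i)(\gamma)\neq u_\xi(0)(\gamma)=u_\eta(0)(\gamma)=u_\eta(i)(\gamma)$, and hence $\Delta(u_\xi(i),u_\eta(i))<\delta$. Nothing in a $\Delta$-system rules this out for the pairs you need. The absence of Souslin subtrees does not help either; it plays no role in this step and is used only inside Theorem \ref{QSTE_ccc}.

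\textbf{The missing idea.} There need be no tails at all.
\begin{enumerate}
\item Take $u_\xi\in U_\xi$ for every limit $\xi$. Each $F_\xi(i)$ is a finite subset of $\xi$, so $\xi\mapsto\max\bigcup_{i<2n}F_\xi(i)$ is regressive.
\item By Fodor's lemma there are a stationary $\Xi$ and a $\bar\rho$ with $F_\xi(i)\subseteq\bar\rho$ for all $\xi\in\Xi$ and $i<2n$. Thinning further, you may assume the tuple $(u_\xi(i)\restriction\bar\rho)_{i<2n}$ is constant on $\Xi$.
\item Take any $\xi\ne\eta$ in $\Xi$ with $u_\xi(0)\perp u_\eta(0)$, and let $\delta=\Delta(u_\xi(0),u_\eta(0))$. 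Then $\delta\ge\bar\rho$. Each $u_\xi(i)$ agrees with $u_\xi(0)$ on $[\bar\rho,\xi)$, and likewise for $\eta$. So every coordinate pair is incomparable with $\Delta(u_\xi(i),u_\eta(i))=\delta$, which gives $f_U(\delta)=\delta<\next(E,\delta)$ whenever $\delta\ge\min E$.
\item Such $\delta$ are unbounded in $\omega_1$. If $\Delta(\{u_\xi(0):\xi\in\Xi\})$ were bounded by some $\gamma$, the elements of this set above a fixed node of $T_\gamma$ would form a chain. Since $T_\gamma$ is countable, this would give an uncountable chain in $T$.
\end{enumerate}
Hence every club is sufficiently fast for $T$ and $S$, and with this lemma the rest of your argument goes through unchanged.
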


\begin{cor}
    Let $S$ and $T$ be A-trees.
    If every finite power of $S \otimes T$ has a tree-base of cardinality $\aleph_1$,
    then there is a club $E$ such that $Q_{S,T,E}$ is c.c.c..
\end{cor}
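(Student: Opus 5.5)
The plan is to construct $E$ by a diagonal recursion so that it is sufficiently fast with respect to $S$ and $T$, and then to invoke Theorem \ref{QSTE_ccc}.

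First I would use the hypothesis to reduce $\Fscr_{S,T}$ to a family of size $\aleph_1$. For each $n$, fix a tree-base $\Vscr_n$ of $(S \otimes T)^{\otimes n}$ of cardinality $\aleph_1$, meaning every subtree of $(S\otimes T)^{\otimes n}$ contains an element of $\Vscr_n$. Let $\Vscr = \bigcup_n \Vscr_n$. By the observation preceding Theorem \ref{QSTE_ccc}, if $V \subseteq U$ then $f_U \leq f_V$ pointwise. Hence for every $f_U \in \Fscr_{S,T}$ there is a $V \in \Vscr$ with $f_U \leq f_V$. Enumerate $\{f_V : V \in \Vscr\}$ as $\{f_\xi : \xi < \omega_1\}$, with repetitions if needed.

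Next I would build $E = \{e_\xi : \xi < \omega_1\}$ as a strictly increasing continuous sequence:
\begin{itemize}
\item $e_0 = 0$;
\item $e_{\xi+1}$ is any countable ordinal greater than both $e_\xi$ and $f_\xi(e_\xi)$;
\item $e_\lambda = \sup_{\xi<\lambda} e_\xi$ for limit $\lambda$.
\end{itemize}
This $E$ is club and $\next(E,e_\xi) = e_{\xi+1}$. Now let $f_U \in \Fscr_{S,T}$ and choose $\xi$ with $f_U \leq f_\xi$. Taking $\alpha = e_\xi \geq \min(E)$ gives
$$f_U(\alpha) \leq f_\xi(\alpha) < e_{\xi+1} = \next(E,\alpha),$$
so $E$ is sufficiently fast. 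The one routine point to check is that each $f_U$ really is a function $\omega_1 \to \omega_1$, i.e.\ the defining minimal $\beta$ exists. I would verify this from the fact that $U$ is an uncountable subtree of a product of A-trees.

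The main obstacle is that Theorem \ref{QSTE_ccc} also assumes that $S$ and $T$ have no Souslin subtrees, and the corollary does not state this. In the proof of that theorem, this assumption was used only to thin out an uncountable antichain so that the domains and ranges of the $p_\xi$ become pairwise incompatible, i.e.\ condition (\ref{dom_ran_antichain}). I would try two routes, in this order:
\begin{itemize}
\item Show that the tree-base hypothesis on all finite powers of $S\otimes T$ already forces the absence of Souslin subtrees, or at least yields what condition (\ref{dom_ran_antichain}) provides. A plausible argument is that a Souslin subtree $R$ of $S$ would give subtrees of $(S\otimes T)^{\otimes 2}$ concentrating on the diagonal, which could be played against the base.
\item Failing that, rerun the argument of Theorem \ref{QSTE_ccc} with (\ref{dom_ran_antichain}) weakened. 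The weakened version would only require that the coordinates of $p_\xi$ and $p_\eta$ that disagree are incomparable, which a $\Delta$-system and pressing-down refinement should give. One then replaces $f_U$ by the analogous function computed only on the coordinates where $u(i)$ and $v(i)$ are incomparable. Such functions still lie in (or are dominated by) $\Fscr_{S,T}$ after passing to a subproduct, so the diagonal construction above still handles them.
\end{itemize}
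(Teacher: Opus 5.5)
\textbf{Verdict: the diagonalization matches the paper, but the proposal leaves the Souslin case open, and neither of your two routes closes it as stated.}

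\textbf{What matches.} Your construction of $E$ is exactly how the paper gets this corollary. The paper states it without proof, as the combination of the remark that $\{f_V : V\in\Vscr\}$ is cofinal in $\Fscr_{S,T}$ with Theorem \ref{QSTE_ccc}. Your diagonalization against an $\omega_1$-enumeration of that cofinal family is correct. The paper then simply invokes Theorem \ref{QSTE_ccc}, silently carrying over its hypothesis that $S$ and $T$ have no Souslin subtrees. You correctly notice this mismatch.

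\textbf{Route 1 is false.} Let $R$ be a free Souslin tree (these exist, e.g.\ under $\diamondsuit$) and take $S=T=R$. For some $z$, every subtree of $R^{\otimes m}$ contains the set $\{y : y\le z\}\cup\{y\ge z : y \text{ has the same pattern of equal coordinates as } z\}$. The reason is that the second part is a product of distinct cones of $R$, hence Souslin by freeness, and an uncountable downward closed subset of a Souslin tree contains a cone. So the tree-base hypothesis holds. Yet for a family such as $q_\xi=\{(s_\xi,s_\xi)\}$, no uncountable refinement satisfies (\ref{dom_ran_antichain}).

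\textbf{Route 2 points the wrong way.} Inside a Souslin subtree, no thinning makes coordinates at distinct levels incomparable. Computing $f_U$ only on the incomparable coordinates ignores exactly the incompatibilities where a domain coordinate of $p_\xi$ lies below one of $p_\eta$ while the corresponding range coordinates are incomparable. No choice of $E$ controls those.

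\textbf{The fix: nothing needs weakening.} The function $f_U$ only looks at pairs $u,v$ with $u(k)\perp v(k)$ for \emph{every} $k$. Your ``routine point'' (totality of $f_U$) is the Baumgartner--Malitz--Reinhardt lemma applied to the disjoint sum of the $2n$ trees. The argument then runs as follows.
\begin{enumerate}
\item Refine only to get (\ref{root}) and (\ref{fixed_stem}). This uses pressing down and pigeonhole, with no Souslin hypothesis.
\item List each $p_\xi$ as $\vec p_\xi=(a^\xi_k,b^\xi_k)_{k<n}$, with fixed restrictions $(w_k,w'_k)$ to the level $\zeta\in E$.
\item Let $U$ be the downward closure of the $\vec p_\xi$. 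Fastness gives $\alpha\ge\min(E)$ and coordinatewise incomparable $u,v\in U$ with every $\Delta(u(k),v(k))$ in $[\alpha,\mathrm{next}(E,\alpha))$.
\item Choose $\xi\ne\eta$ with $u\le\vec p_\xi$ and $v\le\vec p_\eta$. Let $s$ be a top element of $q_\xi$ over $a^\xi_k$ and $s'$ a top element of $q_\eta$ over $a^\eta_l$.
\begin{itemize}
\item If $k=l$: $s\perp s'$ and $q_\xi(s)\perp q_\eta(s')$, and both splitting levels lie in the same $E$-interval. So their $\Delta_E$ values agree.
\item If $k\ne l$: they split below $\zeta$ exactly as two top elements of $q_\xi$ do. The $\Delta_E$ values agree because $q_\xi$ is a condition.
\item Pairs involving the common root lie inside a single condition.
\end{itemize}
\end{enumerate}
Hence $q_\xi\cup q_\eta\in Q_{S,T,E}$. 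This bypasses the passage to the antichain $\{p_\xi\}$, which is where (\ref{dom_ran_antichain}) is used. It proves the corollary for arbitrary A-trees.
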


\begin{cor} \label{fast_enough_club}
    Suppose that $P$ preserves subtrees of A-trees.
    If $\dot E$ is a $P$-name for a club subset of $\omega_1$ which does not contain a ground model
    club, then for any A-trees $S$ and $T$, $Q_{\check S,\check T, \dot E}$ is forced by $P$ to be c.c.c..
\end{cor}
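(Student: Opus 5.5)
The plan is to verify, inside the extension $V[G]$, the hypotheses of Theorem \ref{QSTE_ccc} for $S$, $T$ and $E = \dot E^G$, and then apply that theorem there. I will read the statement as carrying the standing hypothesis of Theorem \ref{QSTE_ccc}, namely that $S$ and $T$ have no Souslin subtrees in $V$. That hypothesis is what step (3) of the proof of Theorem \ref{QSTE_ccc} uses, and without it $Q_{S,T,E}$ need not be c.c.c.

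\emph{Step 1: no Souslin subtrees in $V[G]$.} Suppose $U \in V[G]$ is a Souslin subtree of $S$. Since $P$ preserves subtrees, $U$ contains a subtree $W \in V$. A subtree of a Souslin tree is Souslin, so $W$ is Souslin in $V[G]$. An uncountable antichain of $W$ lying in $V$ would remain one in $V[G]$, so $W$ is already Souslin in $V$. This contradicts the hypothesis, and the same argument applies to $T$.

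\emph{Step 2: $E$ is sufficiently fast.} Fix $f \in \Fscr_{S,T}$ as computed in $V[G]$, say $f = f_U$ with $U$ a subtree of $(S\otimes T)^{\otimes n}$. Since $(S\otimes T)^{\otimes n}$ is an $\omega_1$-tree (an A-tree, in fact) in $V$, subtree preservation gives a subtree $W \subseteq U$ with $W \in V$. The definition of $f_W$ is absolute, because it only quantifies over pairs of elements of $W$ and their meets, so $f_W \in V$. By the monotonicity observation preceding Theorem \ref{QSTE_ccc}, $f_U \leq f_W$. Let $C \in V$ be the club of $f_W$-closed ordinals. If $E$ contained a tail $C \setminus \beta$, then $E$ would contain the ground model club $C\setminus \beta$, which is excluded. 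Hence $E$ contains no tail of $C$, and by the stated equivalence there is $\alpha \geq \min(E)$ with $f_W(\alpha) < \next(E,\alpha)$. Then $f_U(\alpha) \leq f_W(\alpha) < \next(E,\alpha)$ as well, so $E$ is sufficiently fast for $S$ and $T$ in $V[G]$.

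\emph{Step 3: conclude.} Applying Theorem \ref{QSTE_ccc} in $V[G]$ shows that $Q_{S,T,E}$ is c.c.c. there. Since $G$ was arbitrary, $P$ forces $Q_{\check S,\check T,\dot E}$ to be c.c.c.

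The main point needing care is the equivalence invoked in Step 2. Its nontrivial direction runs as follows. Suppose $f_W(\alpha) \geq \next(E,\alpha)$ for every $\alpha \geq \min E$. Take a limit point $\gamma$ of $E$ above $\min E$ that is $f_W$-closed. For $\alpha \in E\cap\gamma$ we have $\next(E,\alpha) \leq f_W(\alpha) < \gamma$, so consecutive points of $E$ are separated by values of $f_W$. From this, every sufficiently large $f_W$-closed limit must be a limit of $E$, and hence lie in $E$, which is the required tail containment. If this step turns out delicate, I would instead prove Step 2 directly by contradiction: the set of $\alpha$ with $f_W(\alpha) \geq \next(E,\alpha)$ would then let one recover a ground model club contained in $E$, namely the tail of $C$.
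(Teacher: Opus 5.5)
Your proof is correct and is essentially the argument the paper intends. The paper leaves this corollary unproved and relies on the remark before Theorem \ref{QSTE_ccc}: by subtree preservation, ground-model functions $f_W$ are cofinal in $\Fscr_{S,T}$, so a club containing no ground-model club is sufficiently fast, and Theorem \ref{QSTE_ccc} then applies in the extension. You are right that this derivation needs $S$ and $T$ to have no Souslin subtrees, which the statement omits, and your Step 1 correctly shows that hypothesis persists. Two small points: note explicitly that subtree preservation rules out new cofinal branches, so $S$ and $T$ remain Aronszajn. Also, in your last paragraph start from an arbitrary $f_W$-closed limit $\gamma>\min E$ rather than a limit point of $E$, since the latter presupposes what you want to show.
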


\begin{cor}[essentially \cite{AbrahamShelah}] \label{baum_fast_enough}
    If $S$ and $T$ are A-trees in a model $V$ of ZFC and $E \subseteq \omega_1$ a $V$-generic Baumgartner club, then
    in $V[E]$, $Q_{S,T,E}$ is c.c.c..
\end{cor}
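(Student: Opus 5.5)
The plan is to reduce Corollary~\ref{baum_fast_enough} to Corollary~\ref{fast_enough_club}, applied with $P$ the Baumgartner poset and $\dot E$ its canonical name for the generic club. This needs three facts about $P$:
\begin{itemize}
\item[(i)] $P$ preserves subtrees of A-trees.
\item[(ii)] $\dot E$ is forced to be a club.
\item[(iii)] $\dot E$ is forced to contain no ground model club.
\end{itemize}
There is also a small side condition: $S$ and $T$ must remain A-trees with no Souslin subtrees in $V[E]$, since Theorem~\ref{QSTE_ccc} has that hypothesis. I would handle this by passing to the relevant subtrees in the ground model, or by noting that the hypothesis is only used to obtain the uncountable antichain refinement, which is really needed in $V[E]$.

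For (i), $P$ is strongly proper, as recalled in the preliminaries, so Proposition~\ref{strong_proper_pres_subtree} gives that $P$ preserves subtrees of any $\omega_1$-tree, and in particular of A-trees. Since $P$ is proper, $\omega_1$ is preserved. $T$ acquires no uncountable branch, because an uncountable branch $b$ of $T$ in $V[E]$ would be a subtree (after closing downward, it is just $b$), hence would contain a ground model subtree of $T$. That ground model subtree is a subset of a chain, hence itself an uncountable chain in $V$, which is a contradiction. The same argument applied to $S$, $T$, and all finite powers $(S\otimes T)^{\otimes n}$ shows their subtrees in $V[E]$ contain ground model subtrees. So the family $\mathcal F_{S,T}$ computed in $V[E]$ is dominated by the one computed in $V$, by the monotonicity remark $f_U\le f_V$ for $V\subseteq U$.

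For (ii) and (iii), the fact that $\bigcup_{p\in G}E_p$ is club is standard; I would verify it quickly by density arguments, unboundedness being routine and closure coming from the side conditions $U_p$. For (iii), the recalled property is that no infinite subset of the generic club lies in $V$. If $C\in V$ were a club with $C\subseteq E$, then $C$ itself would be an infinite ground model subset of $E$, a contradiction. Slightly more is needed for ``sufficiently fast'': for each $f\in\mathcal F_{S,T}$, computed in $V$ by the previous paragraph, $E$ must fail to contain a tail of the club $C_f$ of $f$-closed ordinals. Since $E$ contains no infinite ground model set, $E\cap C_f$ cannot be a tail of $E$. I would also double-check the direction required by the definition directly by a density argument: given $p$ and $\beta$, one can find $q\le p$ and $\alpha>\beta$ with $\alpha\in E_q$ and $\next(E_q,\alpha)>f(\alpha)$ forced. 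This is done by putting the interval $(\alpha,f(\alpha)]$ into the excluded part $U_q$, which is an open countable set. That density argument is the main step I expect to require care, namely compatibility with the clopen side condition. I would choose $\alpha$ above all of $E_p\cup U_p$ that lie below some bound and handle the remaining finitely many points by openness.

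With these in hand, $E$ is sufficiently fast with respect to $S$ and $T$ in $V[E]$, and $S$, $T$ remain A-trees. Theorem~\ref{QSTE_ccc}, or directly Corollary~\ref{fast_enough_club}, then yields that $Q_{S,T,E}$ is c.c.c. in $V[E]$.
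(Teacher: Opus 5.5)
Your proposal is correct and follows the paper's intended route: the corollary is an instance of Corollary~\ref{fast_enough_club}, with Proposition~\ref{strong_proper_pres_subtree} supplying subtree preservation for the strongly proper Baumgartner poset, and the fact that the generic club has no infinite ground-model subset ruling out a ground-model club inside $E$; your direct density argument for sufficient fastness is a harmless extra check of the same point. The only loose end is your Souslin-subtree remark. Passing to ground-model subtrees would change the poset $Q_{S,T,E}$, so drop that ``fix'' and simply invoke Corollary~\ref{fast_enough_club}, which is stated for arbitrary A-trees. Also, the sentence ``$E\cap C_f$ cannot be a tail of $E$'' should say ``$E$ contains no tail of $C_f$'', which is what you need and what your density argument gives.
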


\begin{rem}
    The poset to force two A-trees to be club isomorphic defined in \cite{AbrahamShelah} is equivalent to the poset of all $q=(E_q,f_q)$ such that $E_q$ is a finite subset of $\omega_1$ and $f_q$ is in $Q_{S,T,E_q}$.
    This is isomorphic to a suborder of $P * Q_{\check S,\check T,\dot E}$ where $P$ is the poset to add a Baumgartner club.
    The Abraham-Shelah proof of properness works \emph{mutatis mutandis} to establish that $P * Q_{\check S,\check T, \dot E}$
    is proper. 
\end{rem}

\begin{cor}
    $\diamondsuit$ is consistent with the statement that for all pruned A-trees $S$ and $T$ there is a c.c.c.
    poset which forces $S$ is club-isomorphic to $T$.
\end{cor}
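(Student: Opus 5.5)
The plan is to force with a countable support iteration of length $\omega_2$ over a model of $\diamondsuit$ together with $\mathrm{GCH}$ (or just $\mathrm{CH}$). Alternatively, and more directly, we can use a single forcing of size $\aleph_1$ that preserves $\diamondsuit$. The simplest choice is this: given pruned A-trees $S$ and $T$ in a ground model $V$ satisfying $\diamondsuit$, consider the two-step poset $P * Q_{\check S,\check T,\dot E}$, where $P$ adds a Baumgartner club $\dot E$. By Corollary \ref{baum_fast_enough}, in $V[E]$ the poset $Q_{S,T,E}$ is c.c.c. To get the statement, however, the c.c.c. poset must already exist in a model of $\diamondsuit$ for \emph{all} pruned pairs at once. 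So I would instead aim to show the following: if $V$ satisfies $\diamondsuit$, then $V[E]$ still satisfies $\diamondsuit$, and in $V[E]$ every pair of pruned A-trees has a c.c.c. poset forcing them club-isomorphic, namely $Q_{S,T,E}$ itself (after thinning $E$ as in Proposition \ref{density}).

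The key steps, in order, are as follows.

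First, I would check that adding a Baumgartner club preserves $\diamondsuit$. The poset has size $\aleph_1$ and is proper, so it preserves $\omega_1$ and $\mathrm{CH}$. For $\diamondsuit$ itself, I would use a standard argument: a $\diamondsuit$-sequence guessing names (coded as subsets of $\omega_1$, since conditions are hereditarily countable objects) together with strong properness yields a $\diamondsuit$-sequence in the extension. Alternatively, I would invoke the known fact that forcings of size $\aleph_1$ adding no new countable sets preserve $\diamondsuit$ only after checking that the Baumgartner club adds no reals. That holds for Baumgartner's poset, since finite-condition club forcing adds no reals; I would verify this via strong properness and the countable closure of the side components.

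Second, I would show that $E$ is sufficiently fast in $V[E]$ with respect to every pair of A-trees in $V[E]$, not just ground model ones. Here I use Proposition \ref{strong_proper_pres_subtree}. Since $P$ adds no new reals and has size $\aleph_1$, I would argue that every A-tree in $V[E]$ is (isomorphic to) one whose subtrees are captured by a ground model name. The functions $f_U$ for $U$ ranging over subtrees of finite powers of $S\otimes T$ are then each bounded by a ground model function, because $U$ contains a subtree in $V$ relative to some factor of the iteration. The cleaner route is to note that $\Fcal_{S,T}$ in $V[E]$ consists of functions each of which is dominated by some function in $V$ on a club. This is because $P$ is proper and of size $\aleph_1$, so every function $\omega_1\to\omega_1$ in $V[E]$ is bounded by a ground model function. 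Meanwhile $E$ contains no ground model club tail of closure points of any ground model function, by genericity. Thus $E$ is sufficiently fast, and Theorem \ref{QSTE_ccc} gives that $Q_{S,T,E}$ is c.c.c. for pruned A-trees with no Souslin subtrees.

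Third, I need to handle Souslin subtrees, since Theorem \ref{QSTE_ccc} assumes none, and under $\diamondsuit$ they certainly exist. This is the main obstacle. My first attempt would be to bypass Theorem \ref{QSTE_ccc} and argue the c.c.c. directly, in the style of Abraham--Shelah. Given an uncountable antichain in $V[E]$, I would reflect it to a countable model $M$ with $M\cap\omega_1\in E$ a generic point. I would then use the fact that the Baumgartner club is ``fast'' at $\delta=M\cap\omega_1$ relative to everything in $M[E]$, and amalgamate two conditions by the argument in the proof of Theorem \ref{QSTE_ccc}, using elementarity in place of the absence of Souslin subtrees. If that fails, the fallback is to first force with a c.c.c. poset specializing or killing the relevant Souslin subtrees, and then apply Theorem \ref{QSTE_ccc} in the extension. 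Here I would be careful that the composite is still c.c.c. over $V[E]$. Finally, Proposition \ref{density} ensures that the generic for $Q_{S,T,E}$ is a total club-isomorphism on $S\restriction E$ and $T\restriction E$.
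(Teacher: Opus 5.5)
Your second step has a genuine gap, and it is where the real content of the corollary lies. The statement quantifies over \emph{all} pruned A-trees of the final model, and $V[E]$ contains many A-trees that are not in $V$. Proposition \ref{strong_proper_pres_subtree} only shows that $E$ is sufficiently fast for pairs $S,T\in V$.

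Your patch, that every $g:\omega_1\to\omega_1$ in $V[E]$ is dominated by a ground-model function, is false here; that is a c.c.c.\ phenomenon, not a proper one. If $\alpha\mapsto\mathrm{next}(E,\alpha)$ were dominated by some $h\in V$, then every limit $h$-closed ordinal would be a limit point of $E$. So $E$ would contain a ground-model club, which is exactly what you rule out in the next sentence.

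The conclusion of that step also fails. Fix a pruned $S\in V$ and injections $c_\gamma:S_\gamma\to\omega$ in $V$. Put $\sigma(\beta)=\sup(E\cap\beta)$, and in $V[E]$ let $\phi(s)(\beta)=c_{\sigma(\beta)}(s\restriction\sigma(\beta))$ for $\beta<\Ht(s)$. Then:
\begin{itemize}
\item $T=\{\phi(s):s\in S\}$ is an A-tree, since a cofinal branch of $T$ would yield one of $S$;
\item the graph $U$ of $\phi$ is a subtree of $S\otimes T$;
\item $\Delta(\phi(s),\phi(s'))=\mathrm{next}(E,\Delta(s,s'))+1$ whenever $\phi(s)\perp\phi(s')$.
\end{itemize}
Hence $f_U(\alpha)>\mathrm{next}(E,\alpha)$ for every $\alpha$, so $E$ is not sufficiently fast for this $S,T$. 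A single club generic over $V$ cannot serve trees built from that club.

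The missing idea is this: for each pair $S,T$ of the final model, you need a club that is generic over an intermediate model already containing $S,T$. The rest of the forcing must preserve subtrees, and all of this must keep $\diamondsuit$. The paper starts from CH and forces with $\Add(\aleph_1,\aleph_2)$, which gives $\diamondsuit$. Any $S,T\in V[G]$ lie in some $V[G_0]$ with $V[G]=V[G_0][G_1]$, where $G_1$ is generic for the $\sigma$-closed $\Add(\aleph_1,1)$. That forcing preserves subtrees and adds a club containing no club of $V[G_0]$, so Corollary \ref{fast_enough_club} applies.

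The $\omega_2$-length iteration of Baumgartner clubs you mention at the outset cannot replace this. Contrary to your first step, Baumgartner's poset adds Cohen reals: by strong properness, $G\cap M$ is $V$-generic for the countable atomless poset $P\cap M$. So that iteration forces $2^{\aleph_0}=\aleph_2$ and destroys $\diamondsuit$. For a single Baumgartner club $\diamondsuit$ does survive, but because proper posets of size $\aleph_1$ preserve $\diamondsuit$, not because no reals are added.

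Finally, your third step is only a plan. The paper does not treat Souslin subtrees separately; it invokes Corollary \ref{fast_enough_club}, which is stated for arbitrary A-trees. Your fallback of specializing first would additionally need $E$ to remain sufficiently fast after a further c.c.c.\ forcing, and you give no argument for that.
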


\begin{proof}
First observe that the poset $\Add(\aleph_1,1)$ is $\sigma$-closed and adds a club $\dot E$ which does not contain a ground model club.
Since it is $\sigma$-closed, it preserves A-trees \cite[Lemma 3.1]{baumbase} \cite[Lemma 8.13]{trees_lo:Todorcevic} and thus by Corollary \ref{fast_enough_club},
if $S$ and $T$ are A-trees, $\Add(\aleph_1,1)$ forces $Q_{S,T,E}$ is c.c.c..
Now if $V$ satisfies CH and $G \subseteq \Add(\aleph_1,\aleph_2)$ is $V$-generic, then for any A-trees $S$ and $T$ in $V[G]$,
there is a $V$-generic filter $G_0 \subseteq \Add(\aleph_1,\aleph_2)$ and a $V[G_0]$-generic filter
$G_1 \subseteq \Add(\aleph_1,1)$ such that
$V[G] = V[G_0][G_1]$ and $S,T \in V[G_0]$.
The corollary now follows by applying our observation in $V[G_0]$ to the generic extension $V[G_0][G_1] = V[G]$.
\end{proof}

\section{Independence results with \texorpdfstring{$\MA_{\aleph_1}$}{MA(w1)}}
\label{indep_MA:sec}

In this section we prove the main independence results of the paper.
The next proposition is an easy consequence of results in \cite{kunen_isbell} and will serve as the basis establishing
$\MA_{\aleph_1}$ does not suffice for certain applications of PFA to the theory of Aronszajn trees.

\begin{prop} \label{many_U(T)_extend_club}
If $\ZFC$ is consistent, then so is $\ZFC + \MA_{\aleph_1}$ together with there is a family $\Sscr$ of coherent
A-trees such that:
\begin{enumerate}
    \item $|\Sscr| = 2^{\aleph_1}$;
    \item if $S \in \Sscr$, then $\Uscr(S)$ extends the club filter;
    \item if $S \ne S'$ are in $\Sscr$, then $\Uscr(S) \ne \Uscr(S')$.
\end{enumerate}
\end{prop}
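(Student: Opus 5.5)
The plan is to build the model by a finite support iteration of length $\omega_2$ of c.c.c. posets over a model of GCH. The iteration will add $\aleph_2 = 2^{\aleph_1}$ generic coherent A-trees, force $\MA_{\aleph_1}$ by bookkeeping, and in between interleave posets that make each $\Uscr(S)$ contain every club. Because the whole iteration is c.c.c., every club in the final model contains a club from some intermediate stage. So by bookkeeping we only need to handle clubs $C$ and trees $S$ which both appear at intermediate stages. Once $\MA_{\aleph_1}$ holds in the final model, each $\Uscr(S)$ is an ultrafilter by the results of \cite{ICM:stevo}, \cite{l-maps} quoted in the preliminaries. Since a filter containing all clubs extends the club filter, this gives (2), provided the intermediate clubs are taken care of. The trees themselves are added by the standard c.c.c. poset of finite approximations to a coherent subtree of $\omega^{<\omega_1}$, of the kind used in \cite{kunen_isbell}, one for each index $i < \omega_2$, giving $S_i$ generic over the other trees.

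The first key step is, for a coherent $S = S_i$ and a club $C$ in an intermediate model, to define the poset $R_{S,C}$ of finite antichains $x \subseteq S$ with $\Delta(x) \subseteq C$, and to show that $R_{S,C}$ is c.c.c. and adds an uncountable $X$. Density (that is, uncountability of the generic $X$) should be routine from the pruned, coherent structure of $S$: to a finite $x$ we can add a node branching off at a level in $C$ above everything in $x$. For the c.c.c., I would take an uncountable family of conditions and apply a $\Delta$-system argument with pressing down, as in the proof of Theorem \ref{QSTE_ccc}. This reduces the problem to amalgamating two conditions whose new parts split from the common root at levels in $C$. The new $\Delta$ values between the two parts are then determined by coherence together with the genericity of $S_i$ over the model containing $C$. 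The point is that a Cohen-generic coherent tree cannot have its splitting pattern avoid a ground-model club on an uncountable set.

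The second key step is distinctness, i.e.\ item (3). For $i \ne j$, I would fix, at the stage where both $S_i$ and $S_j$ appear, a set $A$ constructed from the mutual genericity of the two trees; for instance, $A$ could be the set of levels $\delta$ where the generic splitting data of $S_i$ differs from that of $S_j$ in a prescribed way. Bookkeeping then also interleaves the posets $R$ that force an uncountable $X \subseteq S_i$ with $\Delta(X) \subseteq A$ and an uncountable $Y \subseteq S_j$ with $\Delta(Y) \cap A = \emptyset$. This puts $A \in \Uscr(S_i)$ and $\omega_1 \setminus A \in \Uscr(S_j)$, so the two filters differ. These posets must again be c.c.c., and they must be consistent with the club requirements. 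This works if $A$ and its complement are both stationary in a strong, generic sense, so that $A \cap C$ and $C \setminus A$ remain usable for every club $C$.

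The main obstacle I expect is the c.c.c. of the posets $R$ in the later stages of the iteration. The ordering of the argument is delicate: we must show that $S_i$ keeps enough genericity relative to $C$ and $A$ after many earlier MA-posets and earlier $R$-posets have been forced. My first attempt would be to prove a preservation lemma: every poset used in the iteration preserves the statement that, for every uncountable family of finite antichains of $S_i$, one can find two members whose union has $\Delta$-values in any prescribed stationary-generic set. If this fails, the fallback is to carry out a single ``product-style'' construction, in the manner of \cite{kunen_isbell}, in which the trees and the sets $A$ are added together with all the witnesses. The c.c.c. would then be verified once, by a $\Delta$-system argument on the combined conditions.
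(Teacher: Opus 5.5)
You have the right framework: a finite support c.c.c. iteration of length $2^{\aleph_1}$ that adds coherent trees and forces $\MA_{\aleph_1}$, which then makes each $\Uscr(S)$ an ultrafilter. But both of your key steps are left as hopes, and the way you handle item (3) creates an obstruction you never resolve.

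\begin{itemize}
\item Your scheme forces $A\in\Uscr(S_i)$ and $\omega_1\setminus A\in\Uscr(S_j)$ separately for each of the $\aleph_2$ pairs. So each $\Uscr(S_i)$ must absorb $\aleph_2$ sets, at stages after arbitrary $\MA$-posets have already acted on $S_i$.
\item Witnesses persist, and $\Uscr(S_j)$ is a proper filter in the final model. So if any intervening stage has added an uncountable $Y\subseteq S_j$ with $\Delta(Y)\subseteq A$, no later c.c.c. poset can add the witness you want for $\omega_1\setminus A$. Your $R$-posets can therefore be c.c.c. only under a genericity hypothesis you have not formulated, let alone shown to be preserved.
\item Your heuristic for the c.c.c. of $R_{S,C}$ is genericity of $S$ over the model containing $C$. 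It does not apply to $R_{S_i,A}$, because $A$ is built from $S_i$ itself.
\item The proposed preservation lemma for arbitrary c.c.c. posets of size $\aleph_1$ has no evident reason to hold.
\item Even for a ground-model club, the amalgamation sketch skips the real issue. The values $\Delta(s,s')$, for $s\in x_\xi$ and $s'\in x_\eta$ extending the same node at the root level, lie above the root. No $\Delta$-system refinement controls them, so genuine work is needed there.
\end{itemize}

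Two ideas make all of this unnecessary.

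\emph{Meet every requirement on $S_\xi$ once, when $S_\xi$ is added.} Every club in a c.c.c. extension contains a club of the \emph{initial} ground model, not just of some intermediate one. Also, a witness $X$ to $Y\in\Uscr(S)$ remains a witness in every further c.c.c. extension. So the rest of the iteration (other trees, $\MA_{\aleph_1}$) can be arbitrary, and no preservation lemma is needed. The paper does this with the result of \cite{kunen_isbell}: for any uniform filter $\Fscr$ on $\omega_1$, some c.c.c. poset adds a coherent A-tree $S$ with $\Uscr(S)\supseteq\Fscr$. This is exactly where an argument of your $R_{S,C}$ type belongs, and the paper imports it rather than reproving it.

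\emph{Get distinctness from a ground-model family.} Start from a model of $\diamondsuit$. It gives stationary sets $\{A_\xi:\xi<2^{\aleph_1}\}$ with pairwise nonstationary intersections. At stage $\xi$, apply \cite{kunen_isbell} to the uniform filter generated by the clubs and $A_\xi$; this filter is uniform because $A_\xi$ stays stationary under c.c.c. forcing. In the final model, $\Uscr(S_\xi)$ extends the club filter and contains $A_\xi$. No proper filter extending the club filter can contain both $A_\xi$ and $A_\eta$. Hence $\Uscr(S_\xi)\ne\Uscr(S_\eta)$, with no pairwise bookkeeping at all.
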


\begin{proof}
    In \cite{kunen_isbell} it was proved that if $\Fscr$ is
    a uniform filter on $\omega_1$, then
    there is a c.c.c. poset which forces the existence of coherent A-tree $S$ such
    that $\Uscr(S)$ extends $\Fscr$.
    Start with a model of ZFC in which there is a family $\Ascr$ of stationary subsets of
    $\omega_1$ such that $|\Ascr| = 2^{\aleph_1}$ and any two elements of $\Ascr$ have nonstationary intersection
    (this is true for instance in any model of $\diamondsuit$).
    Construct an finite support iteration of c.c.c. forcings of length $2^{\aleph_1}$ such that at stage $\xi$,
    a coherent A-tree $S_\xi$ is added by a c.c.c. forcing of cardinality at most $2^{\aleph_1}$ such that
    $\Uscr(S_\xi)$ extends the club filter and contains the $\xi\Th$-element of $\Ascr$.
    (Here we utilize the well-know fact that
    every club subset of $\omega_1$ in a c.c.c. generic extension contains
    a groundmodel club.)
    Finally, force $\MA_{\aleph_1}$ without enlarging $2^{\aleph_1}$
    noting that in this final model each $\Uscr(S)$ for $S \in \Sscr$ is an ultrafilter
    extending the club filter and $S \mapsto \Uscr(S)$ is one-to-one.
\end{proof}

\begin{thm}
It is relatively consistent with ZFC that $2^{\aleph_0} < 2^{\aleph_1}$
and that for any two pruned A-trees $S$ and $T$, 
there is a c.c.c. forcing extension in which $S$ is club-isomorphic to $T$.
In particular, it is relatively consistent with ZFC that there are two
non-club isomorphic A-trees which can be made club isomorphic via a c.c.c. forcing.
\end{thm}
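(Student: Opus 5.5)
The model will be obtained by adding many Cohen subsets of $\omega_1$ to a model of CH. This is the same idea as in the proof of the corollary about $\diamondsuit$ above. Start with $V \models \mathrm{GCH}$ and let $G \subseteq \Add(\aleph_1,\aleph_2)$ be $V$-generic. Since $\Add(\aleph_1,\aleph_2)$ is $\sigma$-closed and $\aleph_2$-c.c. under CH, the extension $V[G]$ adds no reals, so CH still holds there, and it has $2^{\aleph_1} = \aleph_2$. Thus $2^{\aleph_0} = \aleph_1 < \aleph_2 = 2^{\aleph_1}$ in $V[G]$.

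Next let $S$ and $T$ be pruned A-trees in $V[G]$. Each has size $\aleph_1$, so by the $\aleph_2$-c.c. and the product structure of $\Add(\aleph_1,\aleph_2)$, there is a decomposition $V[G] = V[G_0][G_1]$ with $S,T \in V[G_0]$. Here $G_0$ is generic for $\Add(\aleph_1,\aleph_2)$ and $G_1$ is $V[G_0]$-generic for $\Add(\aleph_1,1)$. In $V[G_0]$ the poset $\Add(\aleph_1,1)$ is $\sigma$-closed, so it preserves subtrees of $\omega_1$-trees. It also adds a club $E$ (for instance the closure of the generic subset, or the set of closure points of the generic function) that contains no ground model club: by genericity, any ground model club meets the complement of $E$.

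To apply Corollary \ref{fast_enough_club}, I also need that the trees of $V[G_0]$ which witness $\Fscr_{S,T}$ are unchanged. Subtree preservation gives this: every subtree of a finite power of $S \otimes T$ in $V[G]$ contains one from $V[G_0]$, so the functions $f_U$ with $U \in V[G_0]$ are cofinal. Corollary \ref{fast_enough_club} then shows that $E$ is sufficiently fast for $S$ and $T$ in $V[G]$. Trees with no Souslin subtrees are needed for Theorem \ref{QSTE_ccc}, but the corollary as stated applies to arbitrary A-trees, so I would invoke it directly. The conclusion is that $Q_{S,T,E}$ is c.c.c. in $V[G]$.

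Then I need density. I would shrink $E$ to a club on which every element of $S \restriction E$ and of $T \restriction E$ has infinitely many immediate successors in the restricted tree. This is possible for pruned A-trees after passing to a tail of $E$ thinned further. Thinning a generic club by intersecting with a ground model club keeps the property of containing no ground model club, so $E$ is still sufficiently fast. Proposition \ref{density} then shows that a generic for $Q_{S,T,E}$ yields a $\Delta_E$-preserving bijection $S \restriction E \to T \restriction E$, which is a club isomorphism. This is the main obstacle: the successor condition needs care. If some node has only finitely many successors cofinally often, I would first pass to the club $E'$ of limit points of $E$. Above a node at a level of $E'$, any higher level of $E'$ contains infinitely many extensions, because the tree is pruned and Aronszajn.

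For the final sentence, I would show that $V[G]$ contains two non-club-isomorphic A-trees. My first attempt would be the argument of Proposition \ref{many_U(T)_extend_club}, or the Abraham–Shelah construction: there are $2^{\aleph_1}$ pruned coherent A-trees (or A-trees in general) but only $2^{\aleph_0}\cdot\aleph_1$ club-isomorphism classes would be impossible. More concretely, a Souslin tree (added by $\Add(\aleph_1,1)$, or from $\diamondsuit$, which holds in $V[G]$) and a special A-tree are not club isomorphic, since a club isomorphism would transfer specialness to a club restriction of the Souslin tree. So a Souslin tree and a special A-tree in $V[G]$ serve as the pair, and by the first part they become club isomorphic in a c.c.c. extension.
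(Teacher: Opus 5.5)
Your argument works, but it takes a different route from the paper's proof. It is the model of the paper's $\diamondsuit$ corollary: $S$ and $T$ are captured in an intermediate model, and the club comes from the $\sigma$-closed factor $\Add(\aleph_1,1)$ split off from $\Add(\aleph_1,\aleph_2)$. So subtree preservation is just the classical $\sigma$-closed fact. You do not need GCH: $\diamondsuit$ gives CH, and CH alone gives $2^{\aleph_0}<2^{\aleph_1}$, so the statement as written already follows from that corollary.

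The paper instead iterates Baumgartner's finite-condition club forcing with countable support for $\omega_2$ steps over $2^{\aleph_0}=\aleph_1$, $2^{\aleph_1}=\aleph_3$. It catches $S$ and $T$ at a stage $\delta$ and uses the $\delta$th Baumgartner club. It then needs Proposition \ref{strong_proper_pres_subtree} plus the countable-support preservation theorem for subtree-preserving proper forcings to see that the rest of the iteration keeps that club sufficiently fast. The extra work buys a model with $2^{\aleph_0}=\aleph_2<2^{\aleph_1}=\aleph_3$, so the phenomenon is not a by-product of CH or $\diamondsuit$.

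There the ``in particular'' clause is evidently meant to come from $2^{\aleph_0}<2^{\aleph_1}$ itself, presumably via Abraham and Shelah's theorem that this yields non-club-isomorphic A-trees, rather than from Souslin trees. Your Souslin-versus-special pair is a legitimate non-club-isomorphic pair. However, making it club isomorphic uses Corollary \ref{fast_enough_club} in exactly the case excluded by the hypothesis of Theorem \ref{QSTE_ccc}. The corollary's wording covers it, and the paper applies Theorem \ref{QSTE_ccc} just as loosely, but this is the least supported case. The counting remark before it proves nothing and should be dropped.

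One step is wrong as stated: passing to the limit points $E'$ of $E$ does not give infinitely many immediate successors. Pruned and Aronszajn only imply that a node $x$ \emph{eventually} has infinitely many extensions per level. A node can have a unique extension on a long countable interval of levels above it, and nothing prevents the next point of $E'$ from lying in that interval.

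The repair is the one you gesture at. In $V[G_0]$, let $\lambda(x)$ be the least level beyond which $x$ has infinitely many extensions at every level, computed separately in $S$ and in $T$. It exists because the widths of the cone above $x$ are nondecreasing and cannot stabilize at a finite value without producing cofinal branches. Let $C\in V[G_0]$ be the club of $\gamma$ such that $\lambda(x)<\gamma$ whenever $x$ has height $<\gamma$, and replace $E$ by $E\cap C$. This still contains no $V[G_0]$-club, so it remains sufficiently fast, and Proposition \ref{density} now applies. The paper's own proof does not address this hypothesis at all.
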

\begin{proof}
Start with a model of $2^{\aleph_0} = \aleph_1$ and $2^{\aleph_1} = \aleph_3$
and iterate Baumgarter's club forcing with length $\omega_2$ using countable
support.
The iteration $P_{\omega_2}$ preserves $\aleph_1$ and $\aleph_2$
(since it is proper and $\aleph_2$-c.c.).
By a standard name-counting argument,
the iteration forces $2^{\aleph_0} = \aleph_2 < 2^{\aleph_1} = \aleph_3$.
Now suppose that $\dot S$ and $\dot T$ are $P_{\omega_2}$-names
for A-trees.
Since $P_{\omega_2}$ is $\aleph_2$-c.c.c., there is a $\delta < \omega_2$
such that $\dot S$ and $\dot T$ are $P_\delta$-names.
Let $\dot E$ be the $P_{\delta+1}$-name for the $\delta\Th$ generic club. 
By the observations made before Theorem \ref{QSTE_ccc}, $\dot E$ is forced to be sufficiently fast for
$\dot S$ and $\dot T$.
Since the iterands are strongly proper, they preserve subtrees
of $\omega_1$-trees by Theorem \ref{strong_proper_pres_subtree}.
By \cite[5.1]{con_linear_basis2}, the remainder of
the iteration $P_{\omega_2}/\dot G_\delta$ preserves subtrees of $\omega_1$ trees and hence $\dot E$ is forced
by $P_{\omega_2}$ to be fast enough for $\dot S$ and $\dot T$.
By Theorem \ref{QSTE_ccc} applied in the generic extension, $P_{\omega_2}$ forces that $Q_{\dot S, \dot T, \dot E}$ is c.c.c..
By Proposition \ref{density}, $P_{\omega_2} * Q_{\dot S,\dot T, \dot E}$ forces $\dot S$ and $\dot T$ are club isomorphic.
\end{proof}

\begin{thm} \label{MAomega1+coherent_A-trees_noniso}
    Assuming ZFC is consistent, there is a model of $\ZFC+\MA_{\aleph_1}$ in which the following statements are true:
    \begin{enumerate}
        \item  \label{many_non_near_A-trees}
        There is a family $\Sscr$ of $2^{\aleph_1}$ coherent A-trees such that no pair of elements of $\Sscr$ have
        club-isomorphic subtrees.

        \item \label{many_non_near_C-lines}
        There is a family $\Cscr$ of $2^{\aleph_1}$ Countryman lines such that no pair of elements of $\Cscr$ have 
        uncountable suborders which are isomorphic or reverse isomorphic.
        
        \item There is an A-tree which is not club isomorphic to a coherent A-tree.

        \item \label{con3} There are two pruned A-trees $S$ and $T$ and a strictly increasing level preserving surjection
    $f:S \to T$ such that $S$ and $T$ are not club isomorphic.

        \item \label{con4} There are two pruned A-trees $S$ and $T$ which are not club
    isomorphic and yet $S \leq T$ and $T \leq S$.
    Moreover $S$ can be taken to be coherent.
    
    \end{enumerate}
\end{thm}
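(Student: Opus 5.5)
The approach is to work in the model of Proposition \ref{many_U(T)_extend_club}. There $\MA_{\aleph_1}$ holds, and we have a family $\Sscr$ of $2^{\aleph_1}$ coherent A-trees such that each $\Uscr(S)$ is an ultrafilter extending the club filter and $S\mapsto \Uscr(S)$ is injective. Under $\MA_{\aleph_1}$ each $S\in\Sscr$ contains no Souslin subtree, so each $S$ is Lipschitz. By passing to the pruned part we may assume every $S\in\Sscr$ is pruned. Every clause is then derived from one key lemma.

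\emph{Key lemma.} If $S,S'\in\Sscr$ have club-isomorphic subtrees, then $\Uscr(S)=\Uscr(S')$. Let $f$ be a club isomorphism between $U\restriction E$ and $U'\restriction E$, where $U\subseteq S$ and $U'\subseteq S'$ are subtrees and $E$ is a club. Then $f$ preserves $\Delta_E$, and for nodes at heights in $E$, $\Delta_E(s,t)$ is the largest element of $E\cup\{0\}$ that is $\le \Delta(s,t)$. So whenever $\Delta(s,t)\in E$ and $\Delta(f(s),f(t))\in E$, we get $\Delta(s,t)=\Delta(f(s),f(t))$. I will also use the standard $\MA_{\aleph_1}$ fact behind the proof that $\Uscr(S)$ is an ultrafilter: if $A\in\Uscr(S)$ and $X\subseteq S$ is uncountable, then some uncountable $X'\subseteq X$ has $\Delta(X')\subseteq A$. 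This is proved via a c.c.c. poset of finite $A$-homogeneous sets, which is c.c.c. because $S$ has no Souslin subtrees. Also, $\Uscr(U)=\Uscr(S)$ for any subtree $U\subseteq S$.

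Now let $A\in\Uscr(S)$, and note $A\cap E\in \Uscr(S)$. Find an uncountable $X\subseteq U\restriction E$ with $\Delta(X)\subseteq A\cap E$. Since $E\in\Uscr(S')$, apply the same fact in $S'$ to shrink to $X'\subseteq X$ with $\Delta(f[X'])\subseteq E$. On $X'$ the function $f$ then preserves $\Delta$ exactly, so $\Delta(f[X'])\subseteq A$ and hence $A\in\Uscr(S')$. Thus $\Uscr(S)\subseteq\Uscr(S')$, and both are ultrafilters, so they are equal. This proves clause (\ref{many_non_near_A-trees}).

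For (\ref{many_non_near_C-lines}), let $\Cscr$ consist of the lexicographic orders of the $S\in\Sscr$. These are Countryman because they are coherent trees with no Souslin subtrees under $\MA_{\aleph_1}$; this is the standard Todorcevic argument. Suppose uncountable suborders are isomorphic. Proposition \ref{lo_to_club-iso} then produces club-isomorphic subtrees, contradicting the key lemma. For reverse isomorphism, replace $S'$ by the tree-isomorphic copy obtained by reversing the order on each set of immediate successors. This is done via a bijection of $\omega$ that reverses the order on the relevant finite or coinitial pieces, and if needed we represent nodes with values in $\mathbb{Z}$ instead of $\omega$. The copy has reversed lexicographic order and the same $\Uscr$ up to isomorphism, so the same argument applies.

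For (3), take $S\ne S'$ in $\Sscr$ and consider $S\oplus S'$. If it were club-isomorphic to a coherent $T$, then $S$ and $S'$ would each be club isomorphic to subtrees of $T$, and $T$ is coherent with no Souslin subtrees. Corollary \ref{coherent_club-min} makes $T$ club-embed into each of these subtrees. Composing the embeddings gives club-isomorphic subtrees of $S$ and $S'$, which contradicts the key lemma.

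For (\ref{con3}) and (\ref{con4}), use $S$ and $S\otimes S'$. The projection $S\otimes S'\to S$ is a strictly increasing, level-preserving surjection, since both trees are pruned. If $S\otimes S'$ were club isomorphic to $S$, the other projection would give a level-preserving map from a club restriction of $S$ into $S'$. Using the Lipschitz property of $S'$, I will refine this map on an uncountable set so that it does not decrease $\Delta$, and then run the same ultrafilter argument to get $\Uscr(S)=\Uscr(S')$, a contradiction. For (\ref{con4}), note that $S\otimes S'\le S$ holds trivially. The reverse inequality $S\le S\otimes S'$ needs a strictly increasing map $S\to S'$. I expect this to be the main obstacle. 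I would try to produce this map by $\MA_{\aleph_1}$ applied to the poset of finite strictly increasing partial maps $S\to S'$, verifying c.c.c. through the absence of Souslin subtrees in products of $S$ and $S'$, as in Theorem \ref{QSTE_ccc}. This works without a fast club because no level preservation is required. Here $S$ is coherent, as the statement demands.
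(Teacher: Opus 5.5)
Clauses (1)--(3) are fine; your key lemma even supplies the refinement $\Delta(f[X'])\subseteq E$, which the paper's brief argument leaves implicit. The problems are in (4) and (5).

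In (4) the Lipschitz step does nothing. Any strictly increasing level-preserving $h$ on $S\restriction E$ already satisfies $\Delta_E(h(s),h(t))\ge\Delta_E(s,t)$, because $h(s\restriction\delta)=h(s)\restriction\delta$ for $\delta\in E$. The ultrafilter argument needs the reverse inequality. The Lipschitz property of $S'$ is a statement about self-maps of $S'$, so it says nothing about a map from $S$ to $S'$. Nor can the invariant settle this case. A club isomorphism between $S$ and $S\otimes S'$ only gives $\Uscr(S)=\Uscr(S\otimes S')$. Refining coordinatewise shows $A\cup B\in\Uscr(S\otimes S')$ for all $A\in\Uscr(S)$ and $B\in\Uscr(S')$, so $\Uscr(S\otimes S')$ equals one of $\Uscr(S)$, $\Uscr(S')$. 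The repair is the paper's: $S\otimes S'$ cannot be club isomorphic to both factors, since that would contradict (1). So label the factors so that $S$ is \emph{not} club isomorphic to $S\otimes S'$.

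Clause (5) fails outright with your witnesses. Both $S$ and $S\otimes S'$ are pruned and coherent. Under $\MA_{\aleph_1}$, two such trees with $S\le T$ and $T\le S$ are club isomorphic (first proposition of Section \ref{two_consequences:sec}); this is why the paper calls (5) sharp. Concretely, suppose $h:S\to S'$ is strictly increasing. Then $g(s)=h(s)\restriction\Ht(s)$ is strictly increasing and level preserving, so dropping level preservation buys nothing. Moreover $\{(s,g(s)):s\in S\}$ is a pruned subtree of the coherent tree $S\otimes S'$ isomorphic to $S$. Corollary \ref{coherent_club-min} then makes $S$ club isomorphic to $S\otimes S'$.

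So producing $S\le S\otimes S'$ would destroy the non-club-isomorphism you need. Your c.c.c. claim for the poset of finite strictly increasing maps must also be false in this model: applying it in both directions would make $S$ and $S'$ each club isomorphic to $S\otimes S'$, contradicting (1).

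The missing idea is that $T$ must be non-coherent. With $S$ labelled as above, take $T=S\oplus(S\otimes S')$. Then $S\le T$ by inclusion, and $T\le S$ by the identity on $S$ together with the projection on $S\otimes S'$. A club isomorphism between $S$ and $T$ would make $S\otimes S'$ club isomorphic to a pruned subtree of $S$, hence, by Corollary \ref{coherent_club-min}, to $S$ itself.
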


\begin{rem}
Item (\ref{many_non_near_C-lines}) contradicts both \cite[Theorem 1.1]{univ_A-line} and \cite[Corollary 2.1.13]{walks}
(in the former case, note that any A-line contains an $\aleph_1$-dense nonstationary A-line).
Theorem 1.1 of \cite{univ_A-line} referenced \cite{walks}; the proofs in \cite{univ_A-line} are correct but establish the weaker
result that $\MA_{\aleph_1}$ implies that every Countryman line is minimal,
a result first (essentially) proved in \cite[Theorem 2.1.12]{walks}.
The issue with the proof of \cite[Corollary 2.1.13]{walks} can be summarized more generally as follows:
while any set $\{q_\xi : \xi \in \omega_1\}$ of conditions in a poset such as $Q_{S,T,E}$ can be refined to
some $\{q_\xi : \xi \in \Xi\}$ such that $\{\Delta(\dom(q_\xi)) : \xi \in \Xi\}$ and $\{\Delta(\ran(q_\xi)) : \xi \in \Xi\}$
form $\Delta$-systems, the roots of these $\Delta$-systems can have different cardinalities.
\end{rem}
\begin{proof}
    By Proposition \ref{many_U(T)_extend_club} there is a model of $\MA_{\aleph_1}$ in which there is a family $\Sscr$ consisting of $2^{\aleph_1}$
    coherent A-trees for which the ultrafilters $\Uscr(S)$ are all distinct.
    By replacing these trees with subtrees if necessary, we may assume that they are all pruned.
    By Corollary \ref{coherent_club-min}, it suffices to show that if $U$ and $V$ are distinct elements of $\Sscr$,
    then $U$ and $V$ are not club isomorphic.
    Suppose for contradiction that $E \subseteq \omega_1$ is club and 
    $f:U_E \to V_E$ is an isomorphism.
    Let $X \subseteq E$ be such that $X \in \Uscr(U) \setminus \Uscr(V)$.
    Let $A \subseteq U_E$ be an uncountable antichain such
    that $\Delta(A) \subseteq X$.
    Since $\Delta(A) \subseteq E$ and since $f$ is an isomorphism,
    $\Delta(f(A)) = \Delta(A) \subseteq X$.
    This contradicts that $X \not \in \Uscr(V)$.

    Let $\Cscr$ consist of the trees in $\Sscr$ equipped with their lexicographic ordering.
    Noting that $\MA_{\aleph_1}$ implies all A-trees are special, \cite[]{walks} implies that $\Cscr$ consists of Countryman lines.
    By the properties of $\Sscr$ and by Corollary \ref{coherent_club-min},
    no pair of elements of $\Cscr$ contain uncountable isomorphic or reverse isomorphic suborders.
    
    Next observe that since $U$ and $V$ are both club-isomorphic to subtrees of $U \oplus V$,
    $U \oplus V$ is not club-isomorphic to a coherent tree:
    otherwise Theorem \ref{coherent_club-min} would imply
    that $U \oplus V$ is club-isomorphic to both $U$ and $V$ and hence that $U$ and $V$ would be club isomorphic,
    contrary to our assumption.

Next observe that that $U \otimes V$ is coherent and is club-isomorphic to at most one of $U$ 
or $V$.
For concreteness, suppose that $U$ is not club isomorphic to $U \otimes V$.
Since the projection maps from $U \otimes V$ to $U$ and to $V$
are strictly increasing and order preserving, we have established (\ref{con3}) with $S = U \otimes V$ and $T = U$.

Finally we claim that (\ref{con4}) is witnessed by $S = U$ and $T = U \oplus U \otimes V \cong U \otimes (\omega_1 \oplus V)$.
Clearly $S \leq S + U \otimes V$ and the 2nd coordinate projection
witnesses $T \cong U \otimes (\omega_1 \oplus V) \leq U = S$.
If $S$ and $T$ were club isomorphic, then 
since $U \otimes V$ is club isomorphic to a pruned subtree of $T$ and hence
is club-isomorphic to a pruned subtree of $S$, Theorem \ref{coherent_club-min} implies
that $S = U$ is club-isomorphic to $U \otimes V$, contrary to our assumption.
\end{proof}

\section{Two consequences of \texorpdfstring{$\MA_{\aleph_1}$}{MAomega1} concerning coherent trees}
\label{two_consequences:sec}
We conclude this paper  with two additional consequences of $\MA_{\aleph_1}$.
The first shows that (\ref{con4}) of Theorem \ref{MAomega1+coherent_A-trees_noniso} is sharp.

\begin{prop}
    Assume $\MA_{\aleph_1}$ and suppose $S$ and $T$ are pruned A-trees and $S$ is coherent.
    If $S \leq T$ and $T \leq S$, then $S$ is club isomorphic to a subtree of $T$.
    If moreover $T$ is coherent, then $S$ is club isomorphic to $T$.
\end{prop}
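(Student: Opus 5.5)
The plan is to reduce to level-preserving maps, use the Lipschitz property of $S$ to find an uncountable set on which the composite map preserves $\Delta$, and then pass to a pruned subtree on which $f$ is injective. Throughout, $\MA_{\aleph_1}$ is used in two ways. First, all A-trees are special, so $S$, being coherent, has no Souslin subtrees and is therefore Lipschitz by the theorem of \cite{l-maps} quoted in the preliminaries. Second, Corollary \ref{coherent_club-min} applies to $S$ and, in the second part, to $T$.

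\textbf{Reduction to level-preserving maps.} Let $f:S\to T$ and $g:T\to S$ be strictly increasing order preserving maps. Since $\Ht(f(s))\ge \Ht(s)$, we can replace $f$ by $f'(s)=f(s)\restriction \Ht(s)$, meaning the predecessor of $f(s)$ in $T$ at level $\Ht(s)$. If $s<s'$, then $f'(s)$ and $f'(s')$ both lie on the chain below $f(s')$ at heights $\Ht(s)<\Ht(s')$, so $f'(s)<f'(s')$. Hence $f'$ is still strictly increasing and is level preserving. Do the same for $g$, so from now on both maps are level preserving. Then $h=g\circ f:S\to S$ is level preserving and strictly increasing. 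Because $h$ maps common predecessors to common predecessors, it satisfies $\Delta(h(s),h(s'))\ge\Delta(s,s')$ for all $s,s'$.

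\textbf{Finding a $\Delta$-preserving set.} For each $x\in h[S]$ choose $k(x)\in S$ with $h(k(x))=x$. Then $k$ is an uncountable partial level-preserving map from $S$ to $S$. By the Lipschitz property there is an uncountable $X\subseteq h[S]$ with $\Delta(x,y)\le\Delta(k(x),k(y))$ for all $x\ne y$ in $X$. Applying the inequality from the previous paragraph to $k(x),k(y)$ gives the reverse inequality. Hence on $Y=k[X]$ the map $h$ preserves $\Delta$ exactly.

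\textbf{Injectivity and the first conclusion.} Let $W$ be the set of $w\in S$ having uncountably many extensions in $Y$; this is a pruned subtree of $S$. I claim $h\restriction W$ is injective. Suppose $w\ne w'$ are on the same level of $W$ with $h(w)=h(w')$. Choose $y\ge w$ and $y'\ge w'$ in $Y$ of large height. Then $\Delta(y,y')<\Ht(w)$, using Hausdorffness for limit levels. On the other hand $h(y)$ and $h(y')$ both extend $h(w)$, so $\Delta(h(y),h(y'))\ge \Ht(w)$, contradicting $\Delta$-preservation on $Y$. Consequently $f\restriction W$ is injective. Since $f$ is level preserving and strictly increasing, $f[W]$ is downward closed, and $f\restriction W$ is an isomorphism of $W$ onto the subtree $f[W]$ of $T$. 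By Corollary \ref{coherent_club-min}, $S$ club-embeds into its subtree $W$, and since both are pruned they are club-isomorphic. Composing, $S$ is club-isomorphic to the subtree $f[W]$ of $T$.

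\textbf{Second conclusion and the main difficulty.} If $T$ is also coherent, then $f[W]$ is a pruned subtree of the coherent tree $T$, which has no Souslin subtrees. Corollary \ref{coherent_club-min} makes $f[W]$ club-isomorphic to $T$, and hence $S$ is club-isomorphic to $T$. The main obstacle is the injectivity step. Strictly increasing maps can collapse many nodes, and coherence alone does not prevent this. The crux is using the Lipschitz property on a choice of inverse $k$ to upgrade the automatic inequality for $h$ to an equality on an uncountable set. The remaining care is at limit levels in the injectivity argument, which Hausdorffness handles.
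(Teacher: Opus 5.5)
Your argument is correct and follows the paper's proof. You compose $h=g\circ f$, find an uncountable set on which $h$ preserves $\Delta$, pass to the pruned subtree of nodes with uncountably many extensions in that set, show $f$ is injective there, and apply Corollary~\ref{coherent_club-min} to $S$ (and to $T$ in the coherent case). The only variations are that, where the paper cites \cite[Lemma 1.10]{l-maps}, you derive the $\Delta$-preserving set directly from the Lipschitz property of $S$ applied to a right inverse $k$ of $h$, and you make explicit the reduction to level-preserving maps that the paper takes for granted.
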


\begin{proof}
    Let $f:S \to T$ and $g:T \to S$ be strictly increasing level preserving maps and let
    $h(s) = g(f(s))$.
    By \cite[Lemma 1.10]{l-maps}, there is an uncountable antichain $A \subseteq S$ such that for all
    $s \ne s'$ in $A$, $\Delta(h(s),h(s')) = \Delta(s,s')$.
    Since 
    $$\Delta(s,s') \leq \Delta(f(s),f(s')) \leq \Delta(g(f(s)),g(f(s'))) = \Delta(h(s),h(s')),$$
    it follows that $\Delta(s,s') = \Delta(f(s),f(s'))$ for all $s \ne s'$ in $A$.
    Let $U$ consist of all $u \in S$ such that $u$ has uncountably many extensions in $A$.
    Note that $U$ is pruned: if $u \in U$ and $\alpha \geq \Ht(u)$, then since $S_\alpha$ is countable
    the pigeonhole principle implies some $u' \in S_\alpha$ extending $u$ has uncountably many extensions in $A$.

    We next claim that $f \restriction U$ is an isomorphism.
    Since $f$ is level preserving and strictly increasing, it suffices to show that $f$ is one-to-one on levels.
    Toward this end, suppose that $u \ne u'$ are in $U$ and have the same height $\alpha$.
    Let $s$ and $s'$ be elements of $A$ extending $u$ and $u'$ respectively.
    Then $\Delta(s,s') = \Delta(u,u') = \Delta(f(s),f(s')) < \alpha$.
    Since $f(u) \leq f(s)$, $f(u') \leq f(s')$, and $\Ht(f(u)) = \Ht(f(u')) = \alpha$,
    it follows that $f(u) \ne f(u')$.

    Now, by Corollary \ref{coherent_club-min}, $S$ is club-isomorphic to $U \cong f(U)$.
    Moreover, if $T$ is coherent as well, $T$ is club-isomorphic to $f(U)$.
\end{proof}

The context for the next result is that by \cite[Corollary 6.8]{l-maps}, PFA implies that there is a unique ultrafilter, up to Ruden-Keisler equivalence
of the form $\Uscr(T)$ for $T$ a coherent A-tree.
Moreover by \cite{kunen_isbell}, PFA implies that $\Uscr(T)$ is minimal in the Ruden-Keisler order with respect
to being a uniform ultrafilter on $\omega_1$ and by \cite{UT_selective} it is an immediate $\leq_{\mathrm{RK}}$-successor
of a selective ultrafilter on $\omega$.

\begin{thm}
    Assume $\MA_{\aleph_1}$.
    If $S$ and $T$ are club-isomorphic A-trees and $S$ is coherent, then
    there is a countable-to-one function $f:\omega_1 \to \omega_1$ such that $\Uscr(S) = f^* \Uscr(T)$.
\end{thm}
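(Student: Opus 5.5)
The plan is to read off $f$ from the club isomorphism, using $\MA_{\aleph_1}$ only to make the splitting levels in $T$ depend on the splitting levels in $S$ alone. Here $f^*\Uscr(T)$ is read as the family of sets $X$ with $f^{-1}[X] \in \Uscr(T)$. The main difficulty is the ccc argument in the third paragraph.

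Fix a club $E$ and an isomorphism $\varphi : S\restriction E \to T\restriction E$, and let $g(\alpha) = \max(E \cap (\alpha+1))$ for $\alpha \ge \min E$. The map $g$ is countable-to-one, since its fibres are the intervals $[e, \next(E,e))$. An isomorphism preserves $\Delta_E$, and for $s,t$ with heights in $E$ we have $\Delta_E(s,t) = g(\Delta(s,t))$. So if $s,t\in S\restriction E$ are incomparable, $\Delta(s,t)$ and $\Delta(\varphi(s),\varphi(t))$ lie in the same $E$-interval. This alone only gives $g_*\Uscr(S) = g_*\Uscr(T)$. To recover $\Uscr(S)$ itself, I want one countable-to-one $f$ and an uncountable antichain $A\subseteq S\restriction E$ such that
$$\Delta(\varphi(s),\varphi(t)) = f(\Delta(s,t)) \quad \text{for all } s \neq t \in A,$$
and such that every uncountable $A' \subseteq A$ also witnesses this.

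Granting such $A$ and $f$, the conclusion follows. By $\MA_{\aleph_1}$, $\Uscr(S)$ is an ultrafilter, and it is generated by the sets $\Delta(A')$ for uncountable antichains $A' \subseteq A$. The generation holds because any uncountable $X \subseteq S$ can be thinned to an antichain below an uncountable subset of $A$ after passing to the pruned tree generated by $A$; this tree is club-isomorphic to $S$ by Corollary \ref{coherent_club-min}. For such $A'$ we get $f[\Delta(A')] = \Delta(\varphi[A']) \in \Uscr(T)$, hence $\Delta(A') \subseteq f^{-1}[\Delta(\varphi[A'])]$. So $\Uscr(S) \subseteq f^*\Uscr(T)$. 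The family $f^*\Uscr(T)$ is a proper filter, because $\Uscr(T)$ is a filter by the same proof as in the coherent case. Since $\Uscr(S)$ is an ultrafilter, equality follows. The countable-to-one property of $f$ is automatic once $f(\alpha)$ lies in $[g(\alpha), \next(E,\alpha))$.

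The main step, and the expected obstacle, is producing $f$ and $A$. Here coherence of $S$ is used. If $s,t \in S_\xi$ split at $\alpha$, then $s(\alpha) \neq t(\alpha)$, and coherence lets me ``graft'' the tails of $s$ and $t$ above $\alpha$ onto a fixed node, so the local configuration at the splitting level is essentially determined by a countable amount of data. First I would fix, in the ground model, a candidate function $f$ by a pigeonhole or pressing-down argument on these countable data. Then I would consider the poset $P$ of finite antichains $p \subseteq S\restriction E$ such that $\Delta(\varphi(s),\varphi(t)) = f(\Delta(s,t))$ for all $s\neq t$ in $p$, ordered by reverse inclusion.

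To show $P$ is c.c.c., I would imitate the proof of Theorem \ref{QSTE_ccc}. Given an uncountable family of conditions, I refine it to one whose domains have the same trace at a level $\zeta$ and are pairwise incompatible above it; for this I use that $S$ and $T$ have no Souslin subtrees under $\MA_{\aleph_1}$. I then use coherence, as in \cite[Lemma 1.10]{l-maps}, to find two conditions whose union still satisfies the equation. An $\aleph_1$-sized family of density arguments ensures the generic antichain is uncountable, and $\MA_{\aleph_1}$ then yields $A$; this $A$ witnesses the equation hereditarily by construction.

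If a single ground-model $f$ cannot be made to work, the fallback is to let the generic object build $f$ simultaneously as a finite partial function in the conditions. The ccc argument would then have to handle pairs from different conditions whose splitting levels coincide, and this is where I expect the delicate case analysis, of the kind flagged in the remark after Theorem \ref{MAomega1+coherent_A-trees_noniso}, to occur.
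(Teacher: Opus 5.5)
There is a genuine gap, and it begins with the direction of $f$. Under your own reading, $\Uscr(S)=f^*\Uscr(T)$ means that $f^{-1}[Y]\in\Uscr(T)$ for every $Y\in\Uscr(S)$. So $f$ must carry splitting levels of $T$ to splitting levels of $S$. Your $f$ goes the other way, $f(\Delta(s,t))=\Delta(\varphi(s),\varphi(t))$. The inclusion $\Delta(A')\subseteq f^{-1}[\Delta(\varphi[A'])]$ only shows $f^{-1}[\Delta(\varphi[A'])]\in\Uscr(S)$, which is a statement about the push-forward of $\Uscr(S)$ along $f$. It says nothing about $f^{-1}[\Delta(A')]$, so ``$\Uscr(S)\subseteq f^*\Uscr(T)$'' does not follow.

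The closing step also fails: $T$ is not coherent, and $\Uscr(T)$ need not be a filter. For example, let $S$ be coherent and closed under finite modifications. Obtain $T$ by shifting each block $[\omega\nu,\omega\nu+\omega)$, $\nu\geq 1$, up by one place (inserting a dummy value $0$ at $\omega\nu$), along exactly those nodes $s$ with $s(0)$ odd; write $\tilde s$ for the image of $s$. Then $s\mapsto\tilde s$ is an isomorphism between the limit levels, so $S$ and $T$ are club-isomorphic. Pairs with $s(0)=t(0)$ even keep their splitting level, while pairs with $s(0)=t(0)$ odd have it moved from $\omega\nu+k$ to $\omega\nu+k+1$. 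Suppose $Y$ witnesses $L_0=\{\omega\nu+2k:\nu\geq1\}\in\Uscr(S)$ (the case $L_1=\{\omega\nu+2k+1:\nu\geq 1\}$ is symmetric). Resetting $y(0)$ to $0$, resp.\ $1$, for $y\in Y$ and passing to $T$ gives uncountable subsets of $T$ with splitting levels in $L_0$, resp.\ $L_1$. So $\Uscr(T)$ contains two disjoint sets, and no argument of the form ``$f^*\Uscr(T)$ is a proper filter and $\Uscr(S)$ is an ultrafilter'' can finish the proof.

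The central step is also not carried out. No ground-model $f$ is actually defined, and the c.c.c.\ of $P$ would require controlling $\Delta(\varphi(s),\varphi(s'))$ for $s,s'$ from different conditions, i.e.\ where the non-coherent tree $T$ splits inside an $E$-interval. Neither coherence of $S$ nor \cite[Lemma 1.10]{l-maps} controls this. In the example above, pairs splitting at the same level of $S$ through different nodes split at different levels of $T$.

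The missing idea is to impose a condition on $S$ alone and let $f$ run from $T$ to $S$. Consider the poset of finite antichains $q\subseteq S\restriction E$ whose splitting levels are separated by $E$ (no two in the same interval between consecutive points of $E$). It is c.c.c., so $\MA_{\aleph_1}$ gives such an uncountable antichain $A$. Let $f$ collapse each $E$-interval onto the point of $\Delta(A)$ it contains, and be countable-to-one elsewhere. Now take $X\in\Uscr(T)$ witnessed by $B$. For $\xi\in E$ choose $t_\xi\in T_\xi$ below $B$ and $s_\xi\in S_\xi$ below $A$. Apply \cite[Lemma 1.10]{l-maps} inside the coherent tree $S$ to $\varphi^{-1}(t_\xi)\mapsto s_\xi$. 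This gives an uncountable $\Xi$ on which $\Delta(\varphi^{-1}(t_\xi),\varphi^{-1}(t_\eta))=\Delta(s_\xi,s_\eta)\in\Delta(A)$. Since $\varphi$ preserves $\Delta_E$, $\Delta(t_\xi,t_\eta)$ lies in the same $E$-interval, so $f(\Delta(t_\xi,t_\eta))=\Delta(s_\xi,s_\eta)$ and hence $f[X]\in\Uscr(S)$. This uses only the ultrafilter property of $\Uscr(S)$ and nothing about the fine structure of $T$.
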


\begin{proof}
    Let $E \subseteq \omega_1$ be a club
    and $\phi:T \restriction E \to S \restriction E$ be an isomorphism.
    Let $Q$ be the poset of all finite antichains $q \subseteq S \restriction E$ such that
    $\Delta(q)$ is separated by $E$.
    It is easily checked that $Q$ is c.c.c. and therefore by an application of $\MA_{\aleph_1}$ to $Q$, there
    is an uncountable antichain $A \subseteq S \restriction E$ such that $\Delta(A)$ is separated by $E$.
    Define $f:\omega_1 \to \Delta(A)$ so that $f(\alpha) = \delta$ if $\sup(E \cap \alpha) = \sup ( E \cap \delta)$.
    Since $E$ and $\Delta(A)$ are unbounded, $f$ is countable-to-one.

    To see that $f^*\Uscr(T) = \Uscr(S)$, suppose that $X \in \Uscr(T)$ and let $B \subseteq T$
    be an uncountable antichain such that $\Delta(B) \subseteq X$.
    Select $t_\xi \in T_\xi$ for $\xi \in E$ such that $t_\xi$ is extended by an element of $B$ and let
    $s_\xi \in S_\xi$ be extended by an element of $A$.
    By \cite[Lemma 1.10]{l-maps}, there is an uncountable $\Xi \subseteq \omega_1$ such that
    for all $\xi \ne \eta$ in $\Xi$, $s_\xi$ is incompatible with $s_\eta$ and 
    $\Delta(s_\xi,s_\eta) = \Delta(\phi(s_\xi),\phi(s_\eta))$.
    Since $\phi$ is an isomorphism, $\Delta(t_\xi,t_\eta)$ and $\Delta(\phi(t_\xi),\phi(t_\eta)) = \Delta(s_\xi,s_\eta)$
    are not separated by $E$ and thus $f(\Delta(t_\xi,t_\eta)) = \Delta(s_\xi,s_\eta)$.
    It follows that $$f(\Delta(\{t_\xi : \xi \in \Xi\}))=\Delta(\{s_\xi : \xi \in \Xi\}) \in \Uscr(S)$$
    and since $\Delta(\{t_\xi : \xi \in \Xi\})\subseteq \Delta(B)\subseteq X,$ we get that $f(X)\in \Uscr(S).$
\end{proof}

\end{document}